%% file: Chase.tex
\documentclass[a4paper,12pt]{article}

\usepackage{amscd}
\usepackage{amsfonts}
\usepackage{amsmath}
\usepackage{amssymb}
\usepackage{amsthm}
\usepackage[T1]{fontenc} 
\usepackage{here} 
\usepackage{mathrsfs} 
\usepackage{txfonts} 
\usepackage[all]{xy} 
\usepackage{algorithm} 
\usepackage{algpseudocode} 
\usepackage{diagbox} 

\allowdisplaybreaks

\theoremstyle{plain}
\newtheorem{thm}{Theorem}[section]
\newtheorem{lmm}[thm]{Lemma}

\newtheorem{crl}[thm]{Corollary}

\theoremstyle{definition}
\newtheorem{dfn}[thm]{Definition}

\newcommand{\vs}[1][0.2]{\vspace{#1in}\noindent\ignorespaces}
\newcommand{\ba}{\begin{array*}}
\newcommand{\ea}{\end{array*}}
\newcommand{\be}{\begin{eqnarray*}}
\newcommand{\ee}{\end{eqnarray*}}
\newcommand{\bi}{\begin{itemize}}
\newcommand{\ei}{\end{itemize}}
\newcommand{\bb}{\vs\begin{itembox}}
\newcommand{\eb}{\end{itembox}}
\newcommand{\bc}{\begin{center}}
\newcommand{\ec}{\end{center}}
\newcommand{\bs}{\vs\begin{screen}}
\newcommand{\es}{\end{screen}}

\def\ens#1{{\mathchoice{\left\{ #1 \right\}}{\{ #1 \}}{\{ #1 \}}{\{ #1 \}}}}
\def\set#1#2{{\mathchoice{\left\{ #1 \ \middle| \ #2 \right\}}{\{ #1 \mid #2 \}}{\{ #1 \mid #2 \}}{\{ #1 \mid #2 \}}}}
\def\r#1{\text{\rm #1}}
\def\t#1{\text{#1}}
\def\Bigv#1{\left| #1 \right|}
\def\v#1{{\mathchoice{\Bigv{#1}}{| #1 |}{| #1 |}{| #1 |}}}
\def\Bign#1{\left\| #1 \right\|}
\def\n#1{{\mathchoice{\Bign{#1}}{\| #1 \|}{\| #1 \|}{\| #1 \|}}}

\newcommand{\bC}{\mathbb{C}}

\newcommand{\bN}{\mathbb{N}}

\newcommand{\bR}{\mathbb{R}}

\newcommand{\bZ}{\mathbb{Z}}

\newcommand{\cC}{\mathscr{C}}
\newcommand{\cD}{\mathscr{D}}

\newcommand{\cF}{\mathscr{F}}

\newcommand{\cI}{\mathscr{I}}

\newcommand{\cM}{\mathscr{M}}

\newcommand{\cP}{\mathscr{P}}

\newcommand{\rC}{\r{C}}

\newcommand{\C}{\bC}

\newcommand{\N}{\bN}

\newcommand{\R}{\bR}
\newcommand{\Z}{\bZ}

\newcommand{\ch}{\r{ch}}

\newcommand{\Hom}{\r{Hom}}

\newcommand{\BAb}{\r{BAb}}

\algnewcommand\algorithmicbreak{{\bf break}}
\algnewcommand\Break{\algorithmicbreak{}}
\algnewcommand\algorithmiccontinue{{\bf continue}}
\algnewcommand\Continue{\algorithmiccontinue{}}

\newcommand{\fn}[2]{
  \footnote[0]{
    $
    \begin{array}{l}
      \r{MSC2020: #1} \\
      \r{Key words: #2}
    \end{array}
    $
  }
}

\title{Non-Archimedean Analogue of Chase's Lemma}
\author{Tomoki Mihara}
\date{}

\begin{document}

\maketitle
\input{Abstract}
\tableofcontents
\fn{11U07, 03E55, 20K25}{non-Archimedean analysis, Chase's lemma, measurable cardinal}

\input{Introduction}
\input{Convention}
\input{Chases_Lemma}

\input{Edas_Theorem}

\input{References}

\end{document}

%% file: Abstract.tex
\begin{abstract}
We formulate and verify a non-Archimedean analogue of Chase's lemma. Following the framework by K.\ Eda removing restriction of cardinality from analogy on direct product between countability and non-$\omega_1$-measurability, we extend the non-Archimedean analogue of Chase's lemma to a non-Archimedean counterpart of the extension by K.\ Eda of the extension by M.\ Dugas and B.\ Zimmermann-Huisgen of Chase's lemma.
\end{abstract}

%% file: Introduction.tex
\section{Introduction}
\label{Introduction}

Let $R$ be a ring. For an $R$-module $M$, we denote by $M^{\vee}$ the $R$-linear dual $\Hom_R(M,R)$. For any family $(M_i)_{i \in I}$ of $R$-modules, the dual functor sends the direct sum $\bigoplus_{i \in I} M_i$ to the direct product $\prod_{i \in I} (M_i^{\vee})$, but does not necessarily send the direct product $\prod_{i \in I} M_i$ to the direct sum $\bigoplus_{i \in I} (M_i^{\vee})$. In this sense, the duality between direct sum and direct product is one-sided.

\vs
The converse direction is closely related to the notion of reflexivity. We say that an $R$-module $M$ is {\it reflexive} if the canonical morphism $M \to M^{\vee \vee}$ is an isomorphism. If a class $\cC$ of reflexive $R$-modules is closed under direct sum with an index set $I$, then we have
\be
\left( \prod_{i \in I} M_i \right)^{\vee} \cong \left( \prod_{i \in I} (M_i^{\vee \vee}) \right)^{\vee} \cong \left( \bigoplus_{i \in I} (M_i^{\vee}) \right)^{\vee \vee} \cong \bigoplus_{i \in I} (M_i^{\vee})
\ee
for any family $(M_i)_{i \in I}$ in $\cC$ indexed by $I$.

\vs
For example, the class of finite free $R$-modules satisfies this property for any finite set $I$. However, an infinitely generated $R$-module is rarely reflexive. In particular, if $R$ is a field and $I$ is a set with $\max \ens{\# R,\# \N} \leq \# I$, then we have
\be
\dim_R R^{\oplus I} & = & \# I \\
\dim_R (R^{\oplus I})^{\vee} & = & \dim_R R^I = 2^{\# I} \\
\dim_R (R^{\oplus I})^{\vee \vee} & = & \dim_R (R^{\oplus \cP(I)})^{\vee} = \dim_R R^{\cP(I)} = 2^{2^{\# I}} > \# I,
\ee
where $\cP(I)$ denotes the power set of $I$. On the other hand, if we consider the Archimedean normed setting, the situation drastically changes because of appearance of Hilbert spaces over $\C$: The $\ell^2$-space $\ell^2(I,\C)$ on $I$ over $\C$ is reflexive with respect to the continuous dual for any set $I$. This phenomenon is not common in Banach spaces over $\C$. For example, for any set $I$, the continuous dual of $\ell^1$-space $\ell^1(I,\C)$ of $I$ over $\C$ is the $\ell^{\infty}$-space $\ell^{\infty}(I,\C)$ of $I$ over $\C$, but the continuous dual of $\ell^{\infty}(I,\C) \cong \rC(\beta I,\C)$ includes the evaluation at a non-principal ultrafilter of $I$ if $\# I \geq \# \N$, which cannot correspond to a point in the image of $\ell^1(I,\C)$. Deeper studies are found in the theory of locally convex spaces.

\vs
Going back to the algebraic setting, it is natural to study reflexive modules other than finite free modules. Specker gave the following surprising breakthrough in \cite{Spe50}:

\begin{thm}[Specker's theorem]
For any countable set $I$, the canonical morphism $\Z^{\oplus I} \to (\Z^I)^{\vee}$ is an isomorphism, and hence $\Z^{\oplus I}$ and $\Z^I$ are reflexive.
\end{thm}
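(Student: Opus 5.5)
The plan is to show that the canonical morphism $\Z^{\oplus I} \to (\Z^I)^{\vee}$, sending $(n_i)_{i \in I}$ to the functional $(a_i)_{i \in I} \mapsto \sum_{i \in I} n_i a_i$, is bijective. The finite case is trivial, so we assume $I = \N$. Write $e_n \in \Z^{\N}$ for the standard unit vectors. Injectivity is immediate, since evaluating at $e_n$ recovers $n_n$. For surjectivity, take $f \in (\Z^{\N})^{\vee}$. The key claim is that $f(e_n) = 0$ for all but finitely many $n$ (Step 1). Granting it, $g = f - \sum_n f(e_n) e_n^{*}$ is a well-defined functional vanishing on $\Z^{\oplus \N}$, and we must then show $g = 0$ (Step 2).

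For Step 1, suppose infinitely many $f(e_n) \neq 0$; passing to a subsequence, assume $f(e_{n_k}) \neq 0$ for all $k$. We build $a = \sum_k c_k e_{n_k} \in \Z^{\N}$ with rapidly growing coefficients $c_k = \epsilon_k 2^{m_k}$, $\epsilon_k \in \ens{0,1}$, and derive a contradiction by divisibility. The tail $\sum_{k \geq K} c_k e_{n_k}$ is divisible by $2^{m_K}$ in $\Z^{\N}$, so $f$ of the tail is divisible by $2^{m_K}$. Hence
\be
f(a) \equiv \sum_{k < K} c_k f(e_{n_k}) \pmod{2^{m_K}}
\ee
for every $K$. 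Choosing $m_k$ inductively so that $2^{m_K}$ far exceeds all the partial sums, the integer $f(a)$ is determined by the sequence of partial sums. We choose the $\epsilon_k$ so that these partial sums are unbounded in absolute value, which is possible because each $f(e_{n_k}) \neq 0$: add the term exactly when it pushes the partial sum beyond $k$. Congruence modulo huge powers of $2$ together with smallness relative to $2^{m_K}$ then forces $\v{f(a)} \geq$ the partial sums, hence $\v{f(a)}$ is arbitrarily large, a contradiction. I expect this bookkeeping, namely choosing $m_K$ so that the residues pin down $f(a)$, to be the main obstacle. A cleaner alternative is Specker's original approach: show $f$ vanishes on $2^k$-divisible tails for large $k$ by a Baire/diagonal argument.

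For Step 2, we use that every $a \in \Z^{\N}$ decomposes as $a = 2^{m} b + 3^{m} c$ with $b, c \in \Z^{\N}$, coordinatewise via $\gcd(2^m,3^m)=1$, modulo a finitely supported correction. More usefully, $a = b + c$ with $b_n$ divisible by $2^n$ and $c_n$ divisible by $3^n$. Then $g(b)$ is divisible by $2^N$ for every $N$: indeed $b$ minus finitely many coordinates is divisible by $2^N$, and $g$ kills finitely supported vectors. So $g(b) = 0$, likewise $g(c) = 0$, hence $g(a) = 0$. The same divisibility trick gives a slicker route to Step 1: apply it to $f$ restricted to suitable subproducts.

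Reflexivity then follows. Dualizing $\Z^{\oplus I} \cong (\Z^I)^{\vee}$ gives $(\Z^{\oplus I})^{\vee} = \Z^I$, so $\Z^{I\vee\vee} = (\Z^{\oplus I})^{\vee} = \Z^I$, and the canonical maps are easily checked to be these identifications. Symmetrically, $(\Z^{\oplus I})^{\vee\vee} = (\Z^I)^{\vee} = \Z^{\oplus I}$.
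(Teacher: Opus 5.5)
The paper does not prove this statement. Specker's theorem appears only in the introduction, quoted from Specker's paper as motivation, so there is no in-paper argument to compare against. Your proposal is a correct, self-contained proof along classical lines:
\begin{itemize}
\item Step 1 is a gliding-hump argument with $2$-adic divisibility, showing that $f(e_n)=0$ for almost all $n$.
\item Step 2 uses the $2^n$/$3^n$ splitting to show that a functional vanishing on $\Z^{\oplus \N}$ vanishes identically. This step is correct as written.
\item The reflexivity deduction is also correct. The check that the canonical maps agree with your identifications is a one-line computation on the $e_i$.
\end{itemize}

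The bookkeeping you flag in Step 1 closes immediately, and your adaptive choice of $\epsilon_k$ is unnecessary.
\begin{itemize}
\item Put $P_K = \sum_{k<K} c_k f(e_{n_k})$, and choose $m_K > m_{K-1}$ with $2^{m_K} \geq 2|P_K|$. This is legitimate, since $P_K$ is fixed before $m_K$ is chosen, and monotonicity of $m_k$ gives the divisibility of the tails.
\item Writing $f(a) = P_K + t\,2^{m_K}$ with $t \in \Z$, either $t = 0$, or $|f(a)| \geq 2^{m_K} - |P_K| \geq |P_K|$. Hence $|f(a)| \geq |P_K|$ for every $K$.
\item Taking every $\epsilon_k = 1$ already gives $|P_{K+1}| \geq 2^{m_K}|f(e_{n_K})| - |P_K| \geq 2^{m_K-1}$, which is unbounded. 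This contradicts $|f(a)| \geq |P_K|$ for all $K$.
\end{itemize}
Your side remarks are not needed and, as stated, not justified, so drop them: the decomposition ``$a = 2^m b + 3^m c$ modulo a finitely supported correction'', the ``slicker route via subproducts'', and the Baire alternative.

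As a comparison with the machinery the paper does contain: your Step 1 is the discrete counterpart of the gliding-hump construction in the paper's proof of its non-Archimedean Chase lemma. There, the tails $s_{>n}$ are small in the supremum norm, boundedness of $f$ makes their images small, and the ultrametric inequality isolates the dominant term $f(x(n))(j(n))$. Over $\Z$ there is no norm. You instead manufacture smallness of tails as $2$-adic divisibility, and the contradiction comes from the Archimedean size of integers. Step 2 is where the slenderness of $\Z$ enters, namely $\bigcap_{n} 2^n\Z = 0$ together with coprimality of $2$ and $3$. That arithmetic input plays the role that boundedness of $f$ plays in the paper's Banach setting.
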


{\it Specker phenomenon} refers to this pathological property of $\Z^I$ for a countable set $I$, which is introduced in \cite{Eda83-2} and is named by A.\ Blass in \cite{Bla92}, and has been deeply studied in various settings especially with slender groups and Fuchs-44-groups. In particular, Specker's theorem is extended to what is nowadays called {\it {\L}o\'s's theorem} by A.\ Ehrenfeucht and J.\ {\L}o\'s in \cite{EL54} and by E.\ C.\ Zeeman \cite{Zee55} independently:

\begin{thm}[{\L}o\'s's theorem]
For any set $I$, if $\# I$ is not $\omega_1$-measurable, then the canonical morphism $\Z^{\oplus I} \to (\Z^I)^{\vee}$ is an isomorphism, and hence $\Z^{\oplus I}$ and $\Z^I$ are reflexive.
\end{thm}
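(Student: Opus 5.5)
Taking Specker's theorem as given, I will treat {\L}o\'s's theorem as a statement about homomorphisms $f \colon \Z^I \to \Z$. First I reduce to the canonical map $\Z^{\oplus I} \to (\Z^I)^{\vee}$, $(a_i) \mapsto (x \mapsto \sum_i a_i x_i)$. This map is injective, since evaluating on the unit vectors $e_i$ recovers the coefficients $a_i$. For surjectivity, fix $f$ and set $a_i = f(e_i)$. I will show two things:
\begin{itemize}
\item the set $S = \set{i \in I}{a_i \neq 0}$ is finite;
\item $f$ vanishes on $\Z^{I \setminus S}$ (as a direct factor of $\Z^I$).
\end{itemize}
Since $\Z^I = \Z^S \times \Z^{I \setminus S}$ with $S$ finite, these two facts give $f(x) = \sum_{i \in S} a_i x_i$. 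Reflexivity then follows formally. Dualizing, $(\Z^{\oplus I})^{\vee} = \Z^I$ always holds. Combined with $(\Z^I)^{\vee} \cong \Z^{\oplus I}$, this shows that the canonical maps into the double duals are the identities under these identifications.

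\emph{Finiteness of $S$.} Suppose $S$ is infinite and choose distinct $i_0, i_1, \ldots \in S$. The restriction of $f$ to $\Z^{\ens{i_n}}$ is a homomorphism on a countable product. By Specker's theorem it is given by finitely many coordinates, so $a_{i_n} = 0$ for all large $n$. This contradicts $i_n \in S$.

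\emph{Vanishing on $\Z^J$ with $J = I \setminus S$.} Here $f(e_j) = 0$ for all $j \in J$. Let $\cF$ be the family of subsets $A \subseteq J$ with $f|_{\Z^A} \neq 0$. I want to show that if $\cF$ is nonempty it yields a nonprincipal $\omega_1$-complete ultrafilter on $J$, contradicting the hypothesis that $\# I$ is not $\omega_1$-measurable. The key lemma, again from Specker's theorem, is the following: for a partition $J = \bigsqcup_{n \in \N} A_n$, the map $\Z^{\N} \to \Z$ obtained by composing $f$ with block-diagonal embeddings kills all but finitely many blocks. Slenderness style arguments (writing an element as $\sum_n x_n$ with $x_n \in \Z^{A_n}$, and twisting by factorials $n! \, y_n$ to force convergence) then show two facts. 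First, $f(x) = \sum_{n < N} f(x|_{A_n})$ for some $N$. Second, at most finitely many $A_n$ lie in $\cF$, and in fact exactly one block carries $f$ after coarsening to a minimal piece. I will choose $A \in \cF$ minimal in the sense that $A$ cannot be split into two members of $\cF$; an $\omega$-indexed splitting process plus the lemma guarantees such an $A$ exists. Then $\cU = \set{B \subseteq A}{f|_{\Z^{A \cap B}} \neq 0}$ is an ultrafilter on $A$. It is $\omega_1$-complete because a countable partition can have only one block in $\cU$. It is nonprincipal since $f(e_j) = 0$ for $j \in J$. This makes $\# A \le \# I$ $\omega_1$-measurable, which is the desired contradiction. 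So $\cF = \emptyset$ and $f|_{\Z^J} = 0$.

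The main obstacle is the last step: extracting a minimal $A \in \cF$ and proving countable completeness, because the factorial trick must handle elements that are not finitely supported in the blocks. I would first prove the sharper Specker consequence that for any countable partition, $f(x) = \sum_{n<N} f(x|_{A_n})$. Minimality then comes from the fact that an infinite descending binary splitting would produce infinitely many disjoint sets in $\cF$, contradicting this sharper statement.
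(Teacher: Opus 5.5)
The paper does not prove this theorem; it only recalls it in the introduction with references. Your argument is correct, and it is the $\Z$-version of the mechanism in \S\ref{Chase--Dugas--Zimmermann-Huisgen--Eda Theorem}:
\begin{itemize}
\item your null ideal $\set{A \subseteq J}{f|_{\Z^A} = 0}$ plays the role of $\cI_{f,r}$;
\item your sharper Specker consequence is condition (5) of $\cD(I)$, obtained from the countable case by a block embedding exactly as in Lemma~\ref{cI is cD};
\item your splitting argument for a minimal $A$ is Lemma~\ref{countably complete measure};
\item discarding the finite set $S$ first replaces the paper's device of allowing a finite exceptional set $I'_0$, which is what puts finite sets into the ideal.
\end{itemize}

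A few small repairs are needed.

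\emph{The $\omega_1$-completeness step.} The reason you give is backwards. For any proper filter, at most one block of a partition can lie in $\mathscr{U}$. What you need is that at least one block does. That is exactly what $f(x) = \sum_{n<N} f(x|_{A_n})$ gives, since otherwise $f|_{\Z^A}$ would vanish.

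\emph{The factorial trick.} It is not needed. Specker's theorem already says every $g \colon \Z^{\N} \to \Z$ equals $c \mapsto \sum_n g(e_n) c_n$ with finitely many nonzero terms.
\begin{itemize}
\item Applied to $c \mapsto f(\sum_n c_n y_n)$ with $y_n \in \Z^{A_n}$, it shows only finitely many $A_n$ lie in $\cF$; say none with $n \geq N$.
\item Applied to $c \mapsto f(\sum_n c_n \, x|_{A_n})$ at $c = (1,1,\ldots)$, it gives $f(x) = \sum_{n<N} f(x|_{A_n})$ for every $x$.
\end{itemize}
This is the sharper statement you flagged as the main obstacle, with no convergence issue.

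\emph{Transfer to $I$.} Say explicitly that $\set{B \subseteq I}{B \cap A \in \mathscr{U}}$ is a nonprincipal $\omega_1$-complete ultrafilter on $I$, which forces $I$ to be uncountable. That is what contradicts the hypothesis under the paper's definition of $\omega_1$-measurability.
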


Here, the $\omega_1$-measurability, which was shortly mentioned as ``measurability'' in an old convention in a way different from the modern terminology on measurability, means the existence of an $\omega_1$-complete non-principal ultrafilter. The conclusion is equivalent to the factorisation property that every group homomorphism $\Z^I \to \Z$ factors through the quotient map $\Z^I \twoheadrightarrow \Z^I/{\sim}_{I_0}$ for a finite subset $I_0 \subset I$, where ${\sim}_{I_0}$ denotes the equivalence relation given by
\be
(x_i)_{i \in I} \sim_{I_0} (x'_i)_{i \in I} \Leftrightarrow \forall i_0 \in I_0[x_{i_0} = x'_{i_0}].
\ee
K.\ Eda further extended {\L}o\'s's theorem in \cite{Eda82} Corollary 2 in terms of a factorisation property:

\begin{thm}[Eda's theorem]
For any set $I$, every group homomorphism $\Z^I \to \Z$ factors through the quotient map $\Z^I \twoheadrightarrow \Z^I/{\sim}_{U_0}$ for a finite subset $U_0 \subset \beta_{\omega} I$, where $\beta_{\omega} I$ denotes the set of $\omega_1$-complete ultrafilters of $I$ and ${\sim}_{U_0}$ denotes the equivalence relation given by
\be
(x_i)_{i \in I} \sim_{U_0} (x'_i)_{i \in I} \Leftrightarrow \forall \cF \in U_0[\set{i \in I}{x_i = x'_i} \in \cF].
\ee
\end{thm}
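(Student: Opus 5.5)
Here is how I would prove Eda's theorem, taking {\L}o\'s's theorem (equivalently, the factorisation through $\sim_{I_0}$ for non-$\omega_1$-measurable index sets) as the basic input. Fix a homomorphism $f \colon \Z^I \to \Z$. For a subset $A \subset I$, write $e_A \colon \Z^A \to \Z^I$ for extension by zero, and call $A$ \emph{null} if $f \circ e_A = 0$. The goal is to show that the family of null sets essentially generates a finite intersection of $\omega_1$-complete ultrafilters.

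\textbf{Step 1 (countable additivity of nullity).} Let $(A_n)_{n \in \N}$ be pairwise disjoint null sets. I claim their union is null. By Specker's theorem applied to the map $\Z^{\N} \to \Z$ given by $(m_n)_n \mapsto f(\sum_n m_n x_n)$, where each $x_n \in \Z^{A_n}$ is fixed, any such map factors through finitely many coordinates. Suppose $f(e_{\bigcup A_n}(x)) \neq 0$ for some $x$. Then a standard Specker-type argument with rescaled sums $\sum_n k_n x|_{A_n}$, using rapidly growing $k_n$, produces a contradiction. This is the usual slenderness of $\Z$.

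\textbf{Step 2 (only finitely many disjoint non-null pieces).} Next I show there is no infinite family of pairwise disjoint non-null sets. Given such a family $(A_n)$, choose $x_n \in \Z^{A_n}$ with $f(e_{A_n} x_n) \neq 0$. The map $\Z^{\N} \to \Z$, $(m_n) \mapsto f(\sum m_n x_n)$, then contradicts Specker's theorem, since it is nonzero on infinitely many unit vectors.

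It follows that there is a maximal finite partition $I = B_1 \sqcup \dots \sqcup B_k \sqcup N$ with each $B_j$ non-null and atomic: for every $C \subset B_j$, exactly one of $C$ and $B_j \setminus C$ is non-null. Here $N$ is null, obtained by exhaustion and Step 1. Finiteness of the splitting process comes from Step 2.

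\textbf{Step 3 (the ultrafilters).} For each $j$, define $\cF_j = \set{C \subset I}{B_j \setminus C \text{ is null}}$. Atomicity and Step 1 show that $\cF_j$ is an $\omega_1$-complete ultrafilter, so $\cF_j \in \beta_\omega I$. Put $U_0 = \ens{\cF_1,\dots,\cF_k}$.

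Now suppose $x \sim_{U_0} x'$ and set $y = x - x'$. Then each zero set $Z_j = \set{i \in B_j}{y_i = 0}$ has null complement in $B_j$. The support of $y$ is therefore contained in $N \cup \bigcup_j (B_j \setminus Z_j)$, a finite union of null sets, so it is null. Hence $f(y) = 0$, and $f$ factors through $\sim_{U_0}$.

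\textbf{Main obstacle.} The delicate point is Step 1, together with the fact that nullity of a set is defined using all elements supported on it. Knowing $f(e_{A_n} x) = 0$ for each $n$ does not immediately give $f(x) = 0$ for $x$ supported on $\bigcup A_n$. For this I would use the decomposition of $\Z^{\bigcup A_n} \cong \prod_n \Z^{A_n}$ and the slenderness-type argument: write $x = \sum_n x_n$ and choose coefficients making $f(\sum_n p^n x_n)$ incompatible with all the partial sums. This is exactly the step where Specker's theorem is essentially used.
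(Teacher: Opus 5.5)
Your proof is correct. The paper does not prove this statement itself; it is quoted from \cite{Eda82} in the introduction. Your argument is, however, exactly the algebraic counterpart of the paper's proof of Theorem \ref{p-adic Eda's theorem}:
\begin{itemize}
\item your ideal of null sets plays the role of $\cI_{f,r}$;
\item your Steps 1 and 2 together give condition (5) of $\cD(I)$ (only finitely many $A_n$ are non-null, and the union of the rest is null), which is what Lemma \ref{cI is cD} extracts from the countable case;
\item your atomic partition is Lemmas \ref{well-foundedness}, \ref{countably complete measure} and \ref{finite maximal pairwise disjoint system};
\item your $\cF_j$ is literally the paper's $\cF(M)$.
\end{itemize}

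Where you differ, your route is the more economical one. The paper checks (5) by applying its countable theorem to a countable product of arbitrary blocks $\prod (V \upharpoonright I')$. The algebraic analogue of that would be the block form of slenderness, i.e.\ homomorphisms $\prod_n G_n \to \Z$ for arbitrary $G_n$. By choosing a single element per block, you only ever need Specker's theorem for $\Z^{\N}$, and not {\L}o\'s's theorem as you announce. Also, your conclusion is qualitative: a $y$ supported on a null set satisfies $f(y)=0$. So you need nothing like the paper's final diagonal recursion, which exists only to make $(r',J_0)$ uniform and forces the paper to adjoin the principal ultrafilters $\iota_I[I_0]$ by hand. In your setting finite sets need not be null (think of a coordinate projection), so principal ultrafilters simply arise as atoms. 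Consequently your null ideal is not literally in $\cD(I)$, but nothing in your argument uses condition (2).

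Two small corrections. First, Step 1 is not delicate once Specker's theorem is granted. With $x_n = x|_{A_n}$, the map $g(m) = f(\sum_n m_n x_n)$ kills every unit vector and depends on only finitely many coordinates. Hence $g = 0$ and $f(x) = g(1,1,\dots) = 0$, so your rescaled-sum and $p^n$ argument is superfluous; it is really a reproof of Specker's theorem. Second, the nullity of the remainder $N$ comes from Step 2, not Step 1. A non-null set containing no atom splits into infinitely many pairwise disjoint non-null pieces, which Step 2 forbids.
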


The combination of {\L}o\'s's theorem and Eda's theorem is called {\it {\L}o\'s--Eda theorem}. Eda's framework, which removes restriction of cardinality from analogy on direct product between countability and non-$\omega_1$-measurability, is not limited to Specker phenomenon for Abelian groups. Indeed, K.\ Eda applied this frame work to Specker phenomenon in a non-Archimedean normed setting.

\vs
We recall duality theory in non-Archimedean analysis. Let $k$ be a complete valuation field with a non-trivial valuation. We denote by $O_k$ the valuation ring of $k$. W.\ H.\ Schikhof introduced a contravariant categorical equivalence between Banach $k$-vector spaces and an abstract notion called ``embeddable absolutely convex complete edged compactoids'' in \cite{Sch95} Theorem 4.6. Although the equivalence itself is abstractly given by the invertibility of a fully faithful essentially surjective functor, its restriction to the case where $k$ is a local field, which we call {\it Schikhof duality}, can be explicitly described by the continuous dual functors between Banach $k$-vector spaces and compact Hausdorff flat linear topological $O_k$-modules (cf.\ \cite{ST02} Theorem 1.2 and \cite{Mih21-1} Proposition 1.7). Although there are many studies (cf.\ \cite{Sch84} and \cite{Sch02}) on reflexivity of locally convex spaces over $k$ analogous to that of locally convex spaces over $\C$, we concentrate on preceding works closely related to Schikhof duality.

\vs
P.\ Schneider and J.\ Teitelbaum extended Schikhof duality to a duality between Banach $k$-linear representations of a profinite group $G$ and compact Hausdorff flat linear topological $O_k[[G]]$-modules for the case where $k$ is a local field with $\ch(k) = 0$, which we call {\it Schneider--Teitelbaum duality}, in \cite{ST02} Theorem 2.3. We extended Schikhof duality to dualities of several classes of locally convex spaces in \cite{Mih21-1} Theorem 3.6, Theorem 3.20, and Theorem 3.32, and to a duality of specific symmetric monoidal categories in \cite{Mih21-1} Theorem 2.2 to obtain a duality between Abelian groups and rigid analytic Abelian groups as a non-Archimedean analogue of Pontryagin duality in \cite{Mih21-1} Theorem 3.5 and Theorem 3.16. We also extended Schneider--Teitelbaum duality to unitarisable Banach $k$-linear representations of a locally profinite group for the case where $k$ is a local field without the restriction of $\ch(k)$ in \cite{Mih21-2} Theorem 3.17.

\vs
As an analogue of the non-reflexivity of $\ell^1(I,\C)$ and $\ell^1(I,\C)^{\vee} \cong \ell^{\infty}(I,\C)$ for an infinite set $I$, the completion $\rC_0(I,k)$ of $k^{\oplus I}$ with respect to the supremum norm and the $\ell^{\infty}$-space $\rC_0(I,k)^{\vee} \cong \ell^{\infty}(I,k)$ of $I$ over $k$ are not reflexive if $k$ is spherically complete. More generally, as an analogue of the non-reflexivity of infinite dimensional vector spaces, when $k$ is spherically complete, a Banach $k$-vector space $V$ is reflexive if and only if $\dim_k V < \infty$ (cf.\ \cite{Roo78} 4.16).

\vs
However, if $k$ is not spherically complete, then the situation drastically changes: Both of $\rC_0(I,k)$ and $\ell^{\infty}(I,k)$ are reflexive for any countable set $I$, and more generally, every Banach $k$-vector space of countable type and its dual Banach $k$-vector space are reflexive (cf.\ \cite{Roo78} 4.16 and 4.17). The result is extended to the case where $\# I$ is not $\omega_1$-measurable (cf.\ \cite{Roo78} 4.21 and \cite{MN89} \S 12 Corollary 7.18). K.\ Eda further extended it to the case without restriction of $\# I$ (cf.\ \cite{MN89} \S 12 Theorem 7.17).

\vs
K.\ Eda further applied this framework to Chase's lemma, which was originally stated as a theorem (cf.\ \cite{Cha62} Theorem 1.2). Chase's lemma is a useful tool to analyse a homomorphism from a direct product to a direct sum. It roughly states that such a homomorphism essentially vanishes if we ignore finite components of the domain and the codomain and the divisible part of the codomain. Since the precise statement is a little complicated because of the use of a filter of right principal ideals of a non-commutative ring, we instead introduce a specialisation to $\Z$:

\begin{thm}[Chase's lemma]
\label{Chase's lemma}
Let $I$ be a countable set, $J$ a set, $(M_i)_{i \in I}$ a family of Abelian groups indexed by $I$, $(N_j)_{j \in J}$ a family of Abelian groups indexed by $J$, and $f$ a group homomorphism $\prod_{i \in I} M_i \to \bigoplus_{j \in J} N_j$. Then there exists a tuple $(m,I_0,J_0)$ of a positive integer $m$, a finite subset $I_0 \subset I$, and a finite subset $J_0 \subset J$ such that
\be
f \left( \ens{0}^{I_0} \times m \prod_{i \in I \setminus I_0} M_i \right) \subset \bigoplus_{j_0 \in J_0} N_{j_0} \oplus \bigcap_{n \in \N} n \bigoplus_{j \in J \setminus J_0} N_j.
\ee
\end{thm}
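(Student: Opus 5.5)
The plan is to argue by contradiction in the classical style of Chase, building a single element of the product whose image has infinitely many nonzero components. Enumerate $I = \ens{i_0, i_1, i_2, \ldots}$ (if $I$ is finite the statement is trivial: take $I_0 = I$). Write $P_n = \ens{0}^{\ens{i_0,\ldots,i_{n-1}}} \times \prod_{k \geq n} M_{i_k}$; this is a decreasing chain of subgroups with trivial intersection. Assume the conclusion fails. Then for every $m \geq 1$, every $n$ and every finite $J_0 \subset J$, there is $x \in m P_n$ with $f(x) \notin \bigoplus_{j_0 \in J_0} N_{j_0} \oplus D_{J_0}$, where $D_{J_0} = \bigcap_{l \in \N} l \bigoplus_{j \in J \setminus J_0} N_j$. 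Equivalently, the projection $\pi_{J \setminus J_0} f(x)$ to $\bigoplus_{j \in J \setminus J_0} N_j$ is not divisible by some integer $l$.

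The construction proceeds inductively. We build elements $x_k \in m_k P_{n_k}$, finite sets $J_0 \subset J_1 \subset \cdots$, and integers $l_k$ with the following properties.
\begin{itemize}
\item $J_k$ contains the support of $f(x_0), \ldots, f(x_k)$.
\item $\pi_{J \setminus J_{k-1}} f(x_k) \notin l_k \bigoplus_{j \in J \setminus J_{k-1}} N_j$.
\item $m_{k+1}$ is a multiple of $l_0 l_1 \cdots l_k$, say $m_{k+1} = (l_0 \cdots l_k)\, m'_{k+1}$.
\item $n_{k+1} > n_k$.
\end{itemize}
Now set $x = \sum_k x_k$. This sum makes sense in $\prod M_i$ because the $n_k$ increase, so each coordinate receives only finitely many contributions.

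The key step compares $f(x)$ with the partial sums. Put $y_k = \sum_{k' > k} x_{k'}$. This tail lies in $m_{k+1} P_{n_{k+1}}$, and more precisely it is $(l_0 \cdots l_k)$ times an element of the product, because each $x_{k'}$ with $k' > k$ was chosen in $m_{k'} P_{n_{k'}}$ and we may pick $x_{k'} = m_{k'} z_{k'}$ and sum the $z$'s coordinatewise. Hence $f(y_k) \in l_k \bigoplus_j N_j$. Since $f(x) = f(x_0) + \cdots + f(x_k) + f(y_k)$, projecting onto $J \setminus J_{k-1}$ kills $f(x_0), \ldots, f(x_{k-1})$. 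So $\pi_{J \setminus J_{k-1}} f(x) \equiv \pi_{J \setminus J_{k-1}} f(x_k) \pmod{l_k}$, which is nonzero modulo $l_k$. In particular $f(x)$ has a nonzero component outside $J_{k-1}$ for every $k$. This contradicts the finiteness of the support of $f(x)$ in $\bigoplus_{j \in J} N_j$.

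The main obstacle is getting the divisibility bookkeeping right. The tail must be divisible by $l_k$ uniformly in the infinite sum, which is why $x_{k'}$ is taken as $m_{k'}$ times an element and the multiples are compounded. One must also check that "not in $\bigoplus_{J_0} N \oplus D_{J_0}$" really yields a single integer $l_k$ witnessing non-divisibility of the projection. It does, because the projection lies outside the intersection $D_{J_0}$, hence outside one of the subgroups $l \bigoplus_{j \in J \setminus J_0} N_j$. No earlier result of the paper is needed beyond the definitions.
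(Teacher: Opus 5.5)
Your argument breaks down only at the final inference, and the fix uses data your construction already records. The paper does not prove this classical statement; it quotes it from Chase. Its proof of Theorem~\ref{p-adic Chase's lemma} is announced as completely parallel to Chase's, and your construction follows the same pattern. The compounded divisibility $l_0 \cdots l_k \mid m_{k'}$ for $k' > k$ plays the role of the shrinking radii $r'(n+1)$ that make the tail $s_{>n}$ negligible, and your $J_{k-1}$ plays the role of $J'_k$.

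Your bookkeeping is correct up to the congruence $\pi_{J \setminus J_{k-1}} f(x) \equiv \pi_{J \setminus J_{k-1}} f(x_k)$ modulo $l_k \bigoplus_{j \in J \setminus J_{k-1}} N_j$. One harmless slip: under your stated conditions the tail $y_k$ need not lie in $m_{k+1} P_{n_{k+1}}$, only in $l_0 \cdots l_k \prod_{i \in I} M_i$. The latter is all you use, and the former holds if you take $m_{k+1} = m_k l_k$.

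\textbf{The gap.} What you conclude is only that $\operatorname{supp} f(x) \not\subset J_{k-1}$ for every $k$. That is compatible with $\operatorname{supp} f(x)$ being finite. Nothing forces $\bigcup_k J_k$ to cover $\operatorname{supp} f(x)$, so a nonzero coordinate of $f(x)$ lying outside every $J_k$ would make all these statements true at once. You need the witness of non-divisibility at stage $k$ to sit in a fresh region, disjoint from those of the other stages.

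\textbf{The fix.} Your first bullet supplies this. Since $\operatorname{supp} f(x_k) \subset J_k$, the element $\pi_{J \setminus J_{k-1}} f(x_k)$ is supported on $J_k \setminus J_{k-1}$. Hence it is not in $l_k \bigoplus_{j \in J_k \setminus J_{k-1}} N_j$ either. Project $f(x) = f(x_0) + \cdots + f(x_k) + f(y_k)$ onto $J_k \setminus J_{k-1}$. This kills $f(x_0), \ldots, f(x_{k-1})$ and leaves $\pi_{J_k \setminus J_{k-1}} f(x) \notin l_k \bigoplus_{j \in J_k \setminus J_{k-1}} N_j$. So $\pi_{J_k \setminus J_{k-1}} f(x) \neq 0$ for every $k$. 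The sets $J_k \setminus J_{k-1}$ are pairwise disjoint, so $f(x)$ has infinitely many nonzero coordinates, which is the contradiction you want.

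Equivalently, choose a single $j_k \in J_k \setminus J_{k-1}$ with $\pi_{j_k} f(x_k) \notin l_k N_{j_k}$. This is how the paper closes its non-Archimedean proof: it picks $j(n) \in J \setminus J'_n$ so that $j$ is injective, then estimates $f(s)$ at the single coordinate $j(n)$.
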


Theorem \ref{Chase's lemma} has various generalisations. For example, M.\ Dugas and B.\ Zimmermann-Huisgen extended Theorem \ref{Chase's lemma} to the non-$\omega_1$-measurable setting in \cite{DZH82} Theorem 2:

\begin{thm}[Dugas--Zimmermann-Huisgen's extension of Theorem \ref{Chase's lemma}]
\label{Dugas--Zimmermann-Huisgen's theorem}
Let $I$ be a set, $J$ a set, $(M_i)_{i \in I}$ a family of Abelian groups indexed by $I$, $(N_j)_{j \in J}$ a family of Abelian groups indexed by $J$, and $f$ a group homomorphism $\prod_{i \in I} M_i \to \bigoplus_{j \in J} N_j$. If $\# I$ is not $\omega_1$-measurable, then there exists a tuple $(m,I_0,J_0)$ of a positive integer $m$, a finite subset $I_0 \subset I$, and a finite subset $J_0 \subset J$ such that
\be
f \left( \ens{0}^{I_0} \times m \prod_{i \in I \setminus I_0} M_i \right) \subset \bigoplus_{j_0 \in J_0} N_{j_0} \oplus \bigcap_{n \in \N} n \bigoplus_{j \in J \setminus J_0} N_j.
\ee
\end{thm}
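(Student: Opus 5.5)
We argue by contradiction, reduce to the countable case handled by Theorem \ref{Chase's lemma}, and use non-$\omega_1$-measurability of $\# I$ to exclude the remaining situation. For a subset $A \subset I$ write $P_A = \prod_{i \in A} M_i$, embedded in $\prod_{i \in I} M_i$ by extension by zero. Write $D_{J_0} = \bigoplus_{j_0 \in J_0} N_{j_0} \oplus \bigcap_{n \in \N} n \bigoplus_{j \in J \setminus J_0} N_j$. Call $A$ \emph{good} if there are a positive integer $m$ and a finite $J_0 \subset J$ with $f(m P_A) \subset D_{J_0}$, and \emph{bad} otherwise.

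The conclusion of the theorem says exactly that $I \setminus I_0$ is good for some finite $I_0$. The good sets are closed under finite unions: take the product of the integers and the union of the finite subsets. Every finite set is good, since $f(P_{I_0})$ is a finitely generated-free-of-constraints image lying in a finite sum $\bigoplus_{j \in J_0} N_j$; more simply, take $J_0$ large enough to contain the supports of $f$ on the finitely many factors, via the splitting $P_{I_0} = \bigoplus_{i \in I_0} M_i$.

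\textbf{Step 1: a bad set contains an infinite disjoint family of bad sets.} Suppose no cofinite subset of $I$ is good. I would show that every bad set $A$ splits as $A = B \sqcup C$ with both $B$ and $C$ bad, or at least that a bad part persists. Iterating this produces a countable disjoint sequence $(A_n)_{n \in \N}$ of bad sets. Then $\prod_{n} P_{A_n} = P_{\bigcup_n A_n}$, which is a countable product of groups. Apply Theorem \ref{Chase's lemma} to $f$ restricted to $\prod_{n \in \N} P_{A_n}$ with the countable index set $\N$. It yields $m$, a finite $N_0 \subset \N$ and a finite $J_0$ such that $f(m P_{A_n}) \subset D_{J_0}$ for every $n \notin N_0$. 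This contradicts the badness of $A_n$ for such $n$.

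\textbf{Step 2: when splitting fails, we get an ultrafilter.} Suppose a bad set $A$ cannot be split into two bad sets. Then $\mathcal{U} = \set{B \subset A}{A \setminus B \text{ is good}}$ is an ultrafilter on $A$. It is non-principal because singletons are good. It is $\omega_1$-complete: if countably many sets $B_n \in \mathcal{U}$ had intersection outside $\mathcal{U}$, then the complements $C_n = A \setminus B_n$ are good, and their union $\bigcup_n C_n$ is bad. Disjointifying the $C_n$ gives good pieces whose union is bad. Applying Theorem \ref{Chase's lemma} to this countable product shows that all but finitely many pieces lie uniformly in a single $D_{J_0}$ after multiplying by one fixed $m$. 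The finitely many remaining pieces are each good, so the whole union is good by the finite-union property, a contradiction.

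Such a $\mathcal{U}$ would make $\# A \leq \# I$ $\omega_1$-measurable, which the hypothesis excludes. Hence splitting is always possible, Step 1 applies, and the final contradiction shows that some cofinite $I \setminus I_0$ is good, which is the statement.

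\textbf{Main obstacle.} The delicate point is the uniformity in Step 2 and in Step 1. One must check that Theorem \ref{Chase's lemma}, applied to the product indexed by pieces $(M'_n = P_{C_n})_{n}$, really gives a single $m$ and a single $J_0$ for all but finitely many $n$. This is fine because that theorem is stated for arbitrary groups $M'_n$ and directly gives $f(\ens{0}^{N_0} \times m \prod_{n \notin N_0} M'_n) \subset D_{J_0}$.

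One must also ensure that the iteration in Step 1 yields disjoint bad pieces, rather than merely nested ones. The fix is to split $A_0 = I \setminus I_0$ as $B_1 \sqcup A_1$ with both parts bad, then split $A_1$ in the same way, and so on. The pieces $B_n$ are then disjoint and bad, which is exactly what the contradiction in Step 1 needs.
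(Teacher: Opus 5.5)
\paragraph{Verdict: genuine gap.} Your architecture is sound and matches the route the paper takes for its non-Archimedean analogue; the paper only cites this classical statement. The ingredients are: good sets form an ideal; Theorem~\ref{Chase's lemma}, applied to a countable disjoint family, makes the union of all but finitely many pieces good; and a bad set that cannot be split into two bad sets carries an $\omega_1$-complete ultrafilter (compare Lemma~\ref{cI is cD}, Lemma~\ref{well-foundedness} and Lemma~\ref{countably complete measure}).

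\paragraph{The false step.} Under your definition, the claim that every finite set is good is false. It is exactly what you use to make $\mathcal{U}$ non-principal. A singleton $\{i\}$ is good only if $f(mM_i)\subset D_{J_0}$ for some $m\geq 1$ and finite $J_0$. The image of a single factor need not have finite support: take $I=\{i\}$, $M_i=\bigoplus_{n\in\N}\Z$, $J=\N$, $N_j=\Z$, and $f$ the identity. Since $\bigcap_{n}n\bigoplus_{j\in J\setminus J_0}\Z=0$, you get $D_{J_0}=\bigoplus_{j\in J_0}\Z$, which never contains $m\bigoplus_{n\in\N}\Z$. So $\{i\}$ is bad, although the theorem holds trivially here with $I_0=I$.

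\paragraph{Why Step~2 breaks.} Step~2 therefore does not show that splitting is always possible. The ultrafilter $\mathcal{U}=\{B\subset A : A\setminus B\text{ good}\}$ is principal at $a$ exactly when $A\setminus\{a\}$ is good and $\{a\}$ is bad. A bad singleton can never be split into two bad sets. So your iteration may stop at a bad set whose ultrafilter is principal, and that contradicts nothing.

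\paragraph{Repair.} Build an exceptional finite set into the definition, as the paper does for $\cI_{f,r}$ through $I'_0$: call $A$ good if $A\setminus A_0$ satisfies your condition for some finite $A_0\subset A$. Then:
\begin{itemize}
\item finite sets are trivially good;
\item subsets and finite unions of good sets remain good;
\item ``$I$ is good'' is literally the theorem;
\item both Chase-lemma steps go through unchanged, because Theorem~\ref{Chase's lemma} makes the union of the tail pieces good outright, and the finitely many remaining pieces are absorbed by finite unions.
\end{itemize}
Alternatively, keep your definition but first observe, by Theorem~\ref{Chase's lemma} applied to any countably infinite set of bad singletons, that only finitely many singletons are bad. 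Start the splitting from $I$ minus these. Then every $\mathcal{U}$ that arises is non-principal and your argument is complete.
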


We recall that Specker's theorem and {\L}o\'s's theorem (and also their non-Archimedean non-spherically complete counterparts) are interpreted into statements on a quotient map. Similarly, Theorem \ref{Chase's lemma} and Theorem \ref{Dugas--Zimmermann-Huisgen's theorem} are interpreted into statements on restrictions to the kernel of the quotient map $\prod_{i \in I} M_i \twoheadrightarrow (\prod_{i \in I} M_i)/{\sim}_{I_0}$, where ${\sim}_{I_0}$ is the equivalence relation defined in the same way above. Following the interpretation, K.\ Eda further removed from Theorem \ref{Dugas--Zimmermann-Huisgen's theorem} the restriction of cardinality in \cite{Eda83-1} Theorem 2:

\begin{thm}[Eda's extension of Theorem \ref{Dugas--Zimmermann-Huisgen's theorem}]
\label{Eda's theorem}
Let $I$ be a set, $J$ a set, $(M_i)_{i \in I}$ a family of Abelian groups indexed by $I$, $(N_j)_{j \in J}$ a family of Abelian groups indexed by $J$, and $f$ a group homomorphism $\prod_{i \in I} M_i \to \bigoplus_{j \in J} N_j$. There exists a tuple $(m,U_0,J_0)$ of a positive integer $m$, a finite subset $U_0 \subset \beta_{\omega} I$, and a finite subset $J_0 \subset J$ such that
\be
f \left( m K_{H_0} \right) \subset \bigoplus_{j_0 \in J_0} N_{j_0} \oplus \bigcap_{n \in \N} n \bigoplus_{j \in J \setminus J_0} N_j,
\ee
where $K_{H_0}$ denotes the kernel of the quotient map $\prod_{i \in I} M_i \twoheadrightarrow (\prod_{i \in I} M_i)/{\sim}_{H_0}$.
\end{thm}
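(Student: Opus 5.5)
The plan is to attach to $f$ an ideal of subsets of $I$, show by Chase's lemma (Theorem \ref{Chase's lemma}) that it is a $\sigma$-ideal with finite quotient Boolean algebra, and read off $U_0$ as the ultrafilters dual to the atoms of that quotient. (I read $H_0$ in the statement as $U_0$.) For $A \subset I$, write $P_A = \set{x \in \prod_{i \in I} M_i}{\forall i \in I \setminus A[x_i = 0]}$. Call $A$ \emph{small} if there are $m \geq 1$ and a finite $J_0 \subset J$ with
\be
f(m P_A) \subset \bigoplus_{j_0 \in J_0} N_{j_0} \oplus \bigcap_{n \in \N} n \bigoplus_{j \in J \setminus J_0} N_j.
\ee
Let $\cI$ be the family of small sets. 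Subsets of small sets are small. For a finite union, multiply the integers $m$ and take the union of the sets $J_0$; here one uses $P_{A \cup B} = P_A + P_B$ and that the target condition is closed under sums and under enlarging $J_0$. Hence $\cI$ is an ideal.

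\textbf{Step 1 ($\sigma$-closure).} Let $(A_n)_{n \in \N}$ be small; we may assume they are pairwise disjoint. Apply Theorem \ref{Chase's lemma} to the restriction of $f$ to $P_{\bigcup_n A_n} \cong \prod_{n \in \N} P_{A_n}$, a countable product. It yields $m$, a finite $N_0 \subset \N$ and a finite $J_0$ that control $f$ on $m\prod_{n \notin N_0} P_{A_n}$. The finitely many $A_n$ with $n \in N_0$ are small, so the finite-union argument shows that $\bigcup_n A_n$ is small.

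\textbf{Step 2 (no infinite antichain modulo $\cI$).} Suppose $(A_n)_{n \in \N}$ are pairwise disjoint and not small. The same application of Theorem \ref{Chase's lemma} shows that $A_n$ is small for every $n \notin N_0$, a contradiction. Hence the Boolean algebra $\cP(I)/\cI$ has no infinite antichain, so it is finite. Let $A_1, \ldots, A_r$ be representatives of its atoms, chosen with $I \setminus (A_1 \cup \cdots \cup A_r) \in \cI$. Put $\cF_k = \set{B \subset I}{A_k \setminus B \in \cI}$. Since $A_k$ is an atom, $\cF_k$ is an ultrafilter. It is non-principal or principal as it happens, and it is $\omega_1$-complete by Step 1. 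Set $U_0 = \ens{\cF_1, \ldots, \cF_r} \subset \beta_{\omega} I$. Now take $x \in K_{U_0}$ and let $S = \set{i \in I}{x_i \neq 0}$. By definition of $K_{U_0}$, $I \setminus S \in \cF_k$ for every $k$, which means $S \cap A_k \in \cI$. Together with $I \setminus (A_1 \cup \cdots \cup A_r) \in \cI$, this gives $S \in \cI$. Hence $K_{U_0} \subset \bigcup_{S \in \cI} P_S$.

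\textbf{Step 3 (uniformity, the main obstacle).} It remains to find one pair $(m, J_0)$ that works for every $P_S$ with $S \in \cI$ simultaneously; this gives $f(mK_{U_0}) \subset \cdots$ as required. I would argue by contradiction with a Chase-type diagonal construction. Suppose no uniform pair exists. Recursively choose $S_n \in \cI$ and $x_n \in n! P_{S_n}$ such that $f(x_n)$ has a component outside $J_{n-1} := \bigcup_{k<n} \r{supp} f(x_k)$ that is not in $\bigcap_{l \in \N} l \bigoplus_{j \in J \setminus J_{n-1}} N_j$. The union $S = \bigcup_n S_n$ is small by Step 1, with some witnessing pair $(m, J_0)$. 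To reach a contradiction, I would form infinite sums $\sum_n c_n x_n$ inside $P_S$; these make sense coordinatewise after disjointifying the $S_n$, which Step 1 permits. Choosing the coefficients $c_n$ so that infinitely many new non-divisible components are forced to lie outside the finite set $J_0$ should contradict smallness of $S$. This is exactly where Chase's original combinatorics enter, and I would reproduce the argument of \cite{Cha62} at this point. If the bookkeeping becomes heavy, I would instead apply Theorem \ref{Chase's lemma} directly to the countable product $\prod_n P_{S_n \setminus \bigcup_{k<n} S_k}$, choosing $x_n$ with supports in these disjoint pieces, so that the failure at stage $n$ is inherited by the tail.
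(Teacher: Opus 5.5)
\textbf{Gap: Step 3 is not proved.} Your Steps 1 and 2 are correct. They follow the same architecture as the paper's proof of the non-Archimedean analogue (Theorem \ref{p-adic Eda's theorem}): an ideal of small sets, Chase's lemma applied to a countable product of disjoint blocks, and $\omega_1$-complete ultrafilters attached to finitely many atoms. The Chase step yields both $\sigma$-closure and the absence of infinite disjoint families of non-small sets.

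Your plan with sums $\sum_n c_n x_n$ does not work as stated. Step 1 lets you shrink \emph{sets}, but replacing $x_n$ by its restriction to $S_n \setminus \bigcup_{k<n} S_k$ changes $f(x_n)$ and may destroy the bad component. Without disjoint supports, such a sum is not even defined in the algebraic product. Deferring to ``Chase's combinatorics'' is not an argument.

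In fact no sums are needed: your recursion already contradicts the smallness of $S = \bigcup_n S_n$.
\begin{itemize}
\item Write $G_{J'}$ for $\bigoplus_{j \in J'} N_j \oplus \bigcap_{l \in \N} l \bigoplus_{j \in J \setminus J'} N_j$. Note that $\bigcap_{l} l \bigoplus_{j \in J \setminus J'} N_j = \bigoplus_{j \in J \setminus J'} \bigcap_{l} l N_j$ componentwise.
\item So the choice of $x_n$ gives a $j_n \in J \setminus J_{n-1}$ with $(f(x_n))_{j_n} \notin \bigcap_{l} l N_{j_n}$. In particular this component is non-zero, so $j_n \in J_n$, and the $j_n$ are pairwise distinct.
\item Let $(m, J^{\ast})$ witness $S \in \cI$. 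For $n \geq m$ we have $m \mid n!$, so $x_n \in n! P_{S_n} \subset m P_S$ and $f(x_n) \in G_{J^{\ast}}$. This forces $j_n \in J^{\ast}$.
\item Infinitely many distinct indices cannot lie in the finite set $J^{\ast}$, which is the contradiction.
\end{itemize}
This is exactly the paper's closing step. There, $I'' = \bigcup_{h} I'(h) \in \cI_{f,r}$, and a single witness for $I''$ is confronted with $x(n)$ for large $n$.

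\textbf{Comparison with the paper.} With Step 3 completed this way, your route is slightly simpler than the paper's. The paper's $\cI_{f,r}$ allows a finite exceptional set $I'_0$ in the witness. Hence it contains all finite sets, and every $\cF(M)$ is non-principal. Its final step must then adjoin principal ultrafilters $\iota_I[I_0]$ to $U_0$. It must also give the $x(n)$ pairwise disjoint supports, so that they eventually avoid the exceptional set $I''_0$ of the witness. Your small sets carry no exceptional set, so your $\cI$ need not contain singletons. Principal ultrafilters then simply appear among the atoms, $P_{S_n} \subset P_S$ holds directly, and Step 3 needs no disjointness. Nothing in Steps 1 and 2 uses finite sets beyond $\emptyset \in \cI$.
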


K.\ Eda indicated in personal communication the expectation of the existence of a non-Archimedean counterpart of Theorem \ref{Eda's theorem}, following the philosophy of Eda's framework. The aim of this paper is to formulate and verify such a non-Archimedean analogue of Theorem \ref{Eda's theorem}.

\vs
We briefly explain contents of this paper. In \S \ref{Convention}, we introduce convention for this paper. In \S \ref{Chase's Lemma}, we formulate and verify a non-Archimedean analogue of Theorem \ref{Chase's lemma}. In \S \ref{Chase--Dugas--Zimmermann-Huisgen--Eda Theorem}, we formulate and verify a non-Archimedean analogue of Theorem \ref{Eda's theorem}, as an extension of the non-Archimedean analogue of Theorem \ref{Chase's lemma}. In order not to restrict the potential reader, we elaborately recall basic arguments on ideals appearing in studies of Chase's lemma, which are well-known to experts.

%% file: Convention.tex
\section{Convention}
\label{Convention}

We denote by $\omega$ the least transfinite ordinal $\aleph_0$, which is identical to the set of non-negative integers. For a set $X$, we denote by $\# X$ its cardinality, and by $\cP(X)$ the set of subsets of $X$. For a set $X$, an ordinal $\alpha$, and a binary relation $R$ on the class of ordinals, we set $\cP_{R \alpha}(X) \coloneqq \set{U \in \cP(X)}{\# U R \alpha}$.

\vs
For a class $X$ and a set $Y$, we denote by $X^Y$ the class of maps $Y \to X$. When we handle a sequence or a family $s$ indexed by a set $I$, we frequently use the map notation $s(i)$ instead of the subscript notation $s_i$ to point the entry at $i \in I$, in order to avoid massive use of subscripts. For a set $I$ and a family $X$ of sets indexed by $I$, a {\it choice function of $X$} is a map $x \colon I \to \bigsqcup_{i \in I} X(i)$ such that for any $i \in I$, the relation $x(i) \in X(i)$ holds. For a map $f$ and a subset $X'$ of its domain, we denote by $f \upharpoonright X'$ the restriction of $f$ to $X'$, and by $f[X']$ the image of $X'$ by $f$. For a set $I$ and a map $f \colon I \to \R_{\geq 0}$, we denote by $\sup_{i \in I} f(i)$ the supremum of $f[I]$ in $\R_{\geq 0} \sqcup \ens{\infty}$. In particular, $\sup_{i \in I} f(i)$ for the case $I = \emptyset$ is $0$ rather than $- \infty$ in this context.

\vs
For a set $X$, an $x \in X$, and a binary relation $R$ on $X$, we set $X_{R x} \coloneqq \set{x' \in X}{x' R x}$. We note that every $d \in \omega$ is identical to $\omega_{< d}$, and hence for a set $X$, $X^d$ formally means $X^{\omega_{< d}}$, which is naturally identified with the set of $d$-tuples in $X$.

\vs
A {\it (non-Archimedean) normed Abelian group} means an Abelian group $V$ equipped with a map $\n{\cdot} \colon V \to \R_{\geq 0}$ satisfying the following:
\bi
\item[(1)] For any $(v_0,v_1) \in V^2$, the inequality $\n{v_0 - v_1} \leq \max \ens{\n{v_0},\n{v_1}}$ holds.
\item[(2)] For any $v \in V$, the equality $\n{v} = 0$ holds if and only if $v = 0$.
\ei
A {\it (non-Archimedean) Banach Abelian group} is a normed Abelian group such that the ultrametric on $V$ defined by
\be
(v_0,v_1) \mapsto \n{v_0 - v_1}
\ee
is complete. Every closed subgroup $W$ of a Banach Abelian group $V$ forms a Banach Abelian group with respect to the restriction of the structure of $V$, and we always regard $W$ as a Banach Abelian group in this way. We denote by $\BAb$ the class of Banach Abelian groups.

\vs
For Banach Abelian groups $V$ and $W$, a group homomorphism $f \colon V \to W$ is said to be {\it bounded} if there exists a $C \in \R_{\geq 0}$ such that for any $v \in V$, the inequality $\n{f(v)} \leq C \n{v}$ holds. We denote by $\n{f}$ the infimum of such a $C$, and call it {\it the operator norm of $f$}.


\vs
A {\it valuation field} is a field $k$ equipped with a map $\v{\cdot} \colon k \to [0, \infty)$ called a {\it (multiplicative) valuation} satisfying the following:
\bi
\item[(1)] The additive group of $k$ forms a normed Abelian group with respect to $\v{\cdot}$.
\item[(2)] For any $(c_0,c_1) \in k^2$, the equality $\v{c_0 c_1} = \v{c_0} \ \v{c_1}$ holds.
\ei
The reader should be careful not to confound the notations of the valuation $\v{\cdot}$ and the cardinality $\#$.

\vs
For a valuation field $k$, a {\it Banach $k$-vector space structure} on a Banach Abelian group $V$ is a $k$-vector space structure on the underlying set of $V$ compatible with the underlying Abelian group structure of $V$ such that for any $(c,v) \in k \times v$, the equality $\n{cv} = \v{c} \ \n{v}$ holds.

\vs
For a set $I$ and a $V \in \BAb^I$, we denote by $\prod V$ the bounded direct product of $V$, i.e.\ the Banach Abelian group whose underlying set is the set of choice functions $x$ of $V$ such that $\sup_{i \in I} \n{x(i)} < \infty$ and whose norm is the supremum norm, i.e.\ the map $\n{\cdot} \colon \prod V \to \R_{\geq 0}$ defined by
\be
\n{x} \coloneqq \sup_{i \in I} \n{x(i)},
\ee
and by $\bigoplus V$ the completed direct sum of $V$, i.e.\ the closed subspace of $\prod V$ given as
\be
& & \set{x \in \prod V}{\forall \epsilon \in \R_{> 0}[\exists I_0 \in \cP_{< \omega}(I)[\forall i \in I \setminus I_0[\n{x(i)} < \epsilon]]]} \\
& = & \set{x \in \prod V}{\forall \epsilon \in \R_{> 0} \left[ \# \set{i \in I}{\n{x(i)} > \epsilon} < \omega \right]}.
\ee
We do not use $\prod$ and $\bigoplus$ for algebraic direct products or algebraic direct sums.

%% file: Chases_lemma.tex
\section{Chase's Lemma}
\label{Chase's Lemma}

For a set $I$, a $V \in \BAb^I$, an $I_0 \in \cP(I)$, and an $r \in \R_{\geq 0}$, we set
\be
\prod^{I_0,r} V & \coloneqq & \set{x \in \prod V}{\forall i \in I_0[x(i) = 0] \land \forall i \in I[\n{x(i)} \leq r]} \\
\bigoplus^{I_0,r} V & \coloneqq & \set{x \in \bigoplus V}{\forall i \in I \setminus I_0[\n{x(i)} \leq r]}.
\ee
Let $I$ be a set, $J$ a set, $V \in \BAb^I$, $W \in \BAb^J$, and $f$ a non-zero bounded group homomorphism $\prod V \to \bigoplus W$. We have a non-Archimedean analogue of Theorem \ref{Chase's lemma}.

\begin{thm}[non-Archimedean Chase's lemma]
\label{p-adic Chase's lemma}
If $I$ is countable, then for any $r \in \R_{> 0}$, there exists an $(r',I_0,J_0) \in \R_{> \n{f}^{-1} r} \times \cP_{< \omega}(I) \times \cP_{< \omega}(J)$ such that the inclusion
\be
f \left[ \prod^{I_0,r'} V \right] \subset \bigoplus^{J_0,r} W
\ee
holds.
\end{thm}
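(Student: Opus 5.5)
The plan is to argue by contradiction with a gliding-hump construction. Instead of divisibility as in Theorem \ref{Chase's lemma}, it uses the strict ultrametric inequality: if $\n{a} > \max \ens{\n{b},\n{c}}$, then $\n{a+b+c} = \n{a}$. Suppose the statement fails for some $r \in \R_{> 0}$. Then for every $r' > \n{f}^{-1} r$, every $I_0 \in \cP_{< \omega}(I)$ and every $J_0 \in \cP_{< \omega}(J)$, there are an $x \in \prod^{I_0,r'} V$ and a $j \in J \setminus J_0$ with $\n{f(x)(j)} > r$. Since $I$ is countable, we fix an enumeration $I = \ens{i_0,i_1,\ldots}$; the finite case is covered by the same argument with the enumeration eventually exhausting $I$. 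We fix $r'_0 > \n{f}^{-1} r$.

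We construct inductively $x_n \in \prod V$, $j_n \in J$ and reals $r'_n$. Assume $x_0,\ldots,x_{n-1}$, $j_0,\ldots,j_{n-1}$ and $r'_n > \n{f}^{-1} r$ are given. Put $s_n \coloneqq x_0 + \cdots + x_{n-1}$ and
\be
J_n \coloneqq \set{j \in J}{\n{f(s_n)(j)} > r} \cup \ens{j_0,\ldots,j_{n-1}},
\ee
which is finite because $f(s_n) \in \bigoplus W$. Applying the failure hypothesis to $(r'_n, \ens{i_0,\ldots,i_{n-1}}, J_n)$ gives $x_n \in \prod^{\ens{i_0,\ldots,i_{n-1}},r'_n} V$ and $j_n \in J \setminus J_n$ with $\n{f(x_n)(j_n)} > r$. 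Because this inequality is strict, we can then choose $r'_{n+1}$ with
\be
\n{f}^{-1} r < r'_{n+1} \leq r'_n \quad \text{and} \quad \n{f} \, r'_{n+1} < \n{f(x_n)(j_n)}.
\ee
This choice of the next radius after $x_n$ has been found is the key point of the construction: the tail is forced to be smaller than the hump just produced.

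Now define $x \coloneqq \sum_{n} x_n$ coordinatewise. The sum is finite at each $i_m$, since $x_n(i_m) = 0$ for $n > m$, and $\n{x} \leq r'_0$ by the ultrametric inequality, so $x \in \prod V$. For each $n$ we decompose $x = s_n + x_n + y_n$, where $y_n \coloneqq \sum_{m > n} x_m$ coordinatewise. We have $\n{y_n} \leq r'_{n+1}$, because the $r'_m$ are non-increasing, and additivity of $f$ gives
\be
f(x)(j_n) = f(s_n)(j_n) + f(x_n)(j_n) + f(y_n)(j_n).
\ee
Since $j_n \notin J_n$, the first term has norm at most $r$, which is less than $\n{f(x_n)(j_n)}$. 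The third term has norm at most $\n{f} \, r'_{n+1}$, which is also less than $\n{f(x_n)(j_n)}$. Hence $\n{f(x)(j_n)} = \n{f(x_n)(j_n)} > r$ for every $n$. The $j_n$ are pairwise distinct, because $j_n \notin J_n \ni j_0,\ldots,j_{n-1}$. So $\set{j \in J}{\n{f(x)(j)} > r}$ is infinite, which contradicts $f(x) \in \bigoplus W$.

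The main obstacle is that $f$ is only bounded, not continuous for the product topology, so we cannot pass to the limit in $\sum_n x_n$. This is why each tail $y_n$ has to be controlled only through the operator-norm bound $\n{f(y_n)} \leq \n{f} \, \n{y_n}$. That in turn is why the strict requirement $r' > \n{f}^{-1} r$ in the statement cannot be weakened to a fixed radius, and why the radii $r'_n$ must be chosen adaptively as above.
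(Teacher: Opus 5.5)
Your proposal is correct and is essentially the paper's gliding-hump argument. The shared steps are: recursively chosen $x_n$ vanishing on the first $n$ coordinates with non-increasing radii, a finite exceptional set of indices in $J$ at each step, the coordinatewise sum $x$, and the three-term decomposition of $f(x)(j_n)$ resolved by the strict ultrametric inequality. Your strict choice $\n{f} \, r'_{n+1} < \n{f(x_n)(j_n)}$ is actually necessary and repairs a slip in the paper: its choice $r'(n+1) = \min \ens{r'(n), \n{f}^{-1} \n{f(x(n))(j(n))}}$ always equals $\n{f}^{-1} \n{f(x(n))(j(n))}$ (since $\n{f(x(n))(j(n))} \leq \n{f} \, r'(n)$), so its condition (4) holds only with equality, and the strict inequality invoked at the end of the paper's proof needs exactly your adjustment.
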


The proof is completely parallel to that of \cite{Cha62} Theorem 1.2.

\begin{proof}
If $\# I < \omega$, then $(r',I_0,J_0) = (\n{f}^{-1} r + 1,I,\emptyset)$ satisfies the desired property. Therefore, it is reduced to the case $I = \omega$ by the countability of $I$. Assume that the assertion does not hold. For an $n \in \omega$, set $I'_n \coloneqq \omega_{< n}$. We recursively construct sequences $j \in J^{\omega}$, $r' \in \R^{\omega}$, and $x \in (\prod V)^{\omega}$ satisfying the following:
\bi
\item[(1)] For any $n \in \omega$, the inequality $r'(n) > \n{f}^{-1} r$ holds.
\item[(2)] For any $n \in \omega$, the inequality $r'(n) \geq r'(n+1)$ holds.
\item[(3)] For any $n \in \omega$, the relation $x(n) \in \prod^{I'_n,r'(n)} V$ holds.
\item[(4)] For any $n \in \omega$, the inequality $\n{f(x(n))(j(n))} \geq \n{f} r'(n+1)$ holds.
\item[(5)] For any $n \in \omega$ and $h \in \omega_{< n}$, the inequality $\n{f(x(h))(j(n))} \leq r$ holds.
\ei
Set $r'_0 \coloneqq \n{f}^{-1} r + 1$, and $J'_0 \coloneqq \emptyset$. By the hypothesis, the tuple $(r'_0,I'_0,J'_0)$ does not satisfy the desired property. Therefore, there is an $x_0 \in \prod^{I'_0,r'_0} V$ such that $f(x_0) \notin \bigoplus^{J'_0,r} W$. This implies that there is a $j_0 \in J$ such that $\n{f(x_0)(j_0)} > r$. We define $j(0)$ as $j_0$, $r'(0)$ as $r'_0$, $r'(1)$ as $\min \ens{r'_0,\n{f}^{-1} \n{f(x_0)(j_0)}}$, and $x(0)$ as $x_0$. We have constructed $j \upharpoonright \omega_{\leq 0}$, $r' \upharpoonright \omega_{\leq 1}$, and $x \upharpoonright \omega_{\leq 0}$ so that the conditions (1) -- (5) restricted to them hold.

\vs
Suppose that $j \upharpoonright \omega_{\leq n}$, $r' \upharpoonright \omega_{\leq n + 1}$, and $x \upharpoonright \omega_{\leq n}$ have been constructed for an $n \in \omega$ so that the conditions (1) -- (5) restricted to them hold. Set
\be
J'_{n+1} \coloneqq \set{j \in J}{\exists h \in \omega_{\leq n}[\n{f(x(h))(j)} > r]}.
\ee
By the definition of the completed direct sum, we have $\# J'_{n+1} < \omega$. By the hypothesis, the tuple $(r'(n+1),I'_{n+1},J'_{n+1})$ does not satisfy the desired property. Therefore, there is an $x_{n+1} \in \prod^{I'_{n+1},r'(n+1)} V$ such that $f(x_{n+1}) \notin \bigoplus^{J'_{n+1},r} W$. This implies that there is a $j_{n+1} \in J \setminus J'_{n+1}$ such that $\n{f(x_{n+1})(j_{n+1})} > r$. We define $j(n+1)$ as $j_{n+1}$, $r'(n+2)$ as $\min \{r'(n+1),\n{f}^{-1} \n{f(x_{n+1})(j_{n+1})}$, and $x(n+1)$ as $x_{n+1}$. We have constructed $j \upharpoonright \omega_{\leq n+1}$, $r' \upharpoonright \omega_{\leq n+2}$, and $x \upharpoonright \omega_{\leq n+1}$ so that the conditions (1) -- (5) restricted to them hold.

\vs
We have recursively constructed desired $j$, $r'$, and $x$. We define a choice function $s$ of $V$ by
\be
s(i) \coloneqq \sum_{h \in \omega_{\leq i}} x(h)(i) \in V(i).
\ee
By the condition (2), we have $\n{s(i)} \leq r'(0)$ for any $i \in \omega$. This implies $s \in \prod V$. By the conditions (4) and (5), $j$ is injective. Therefore, by the definition of the completed direct sum, there is an $n \in \omega$ such that $\n{f(s)(j(n))} \leq r$. We set
\be
s_{< n} & \coloneqq & \sum_{h \in \omega_{< n}} x(h) \in \prod V \\
s_{> n} & \coloneqq & s - (s_{< n} + x(n)) \in \prod V.
\ee
By the conditions (1) and (5), we have
\be
\n{f(s_{< n})(j(n))} = \n{\sum_{h \in \omega_{< n}} f(x(h))(j(n))} \leq \sup_{h \in \omega_{< n}} \n{f(x(h))(j(n))} \leq r < \n{f} r'(n+1).
\ee
By the conditions (2) and (3), we have $x(h) \in \prod^{\omega_{< h},r'(n+1)} V$ for any $h \in \omega_{> n}$. Therefore, for any $i \in \omega$, we have
\be
\n{s_{> n}(i)} & = & \n{s(i) - (s_{< n}(i) + x(n)(i))} = \n{\left( \sum_{h \in \omega_{\leq i}} x(h)(i) \right) - \left( \sum_{h \in \omega_{< n}} x(h)(i) \right) - x(n)(i)} \\
& = & \n{\left( \sum_{h \in \omega_{\leq i}} x(h)(i) \right) - \left( \sum_{h \in \omega_{\leq n}} x(h)(i) \right)} = \n{\left( \sum_{h \in \omega_{\leq i}} x(h)(i) \right) - \left( \sum_{h \in \omega_{\leq \min \ens{n,i}}} x(h)(i) \right)} \\
& = & 
\left\{
\begin{array}{ll}
0 & (i \leq n) \\
\n{\sum_{h=n+1}^{i} x(h)(i)} & (i > n)
\end{array}
\right.
\leq r'(n+1).
\ee
This implies $\n{s_{> n}} \leq r'(n+1)$, and hence
\be
\n{f(s_{> n})(j(n))} \leq \n{f(s_{> n})} \leq \n{f} r'(n+1).
\ee
By the condition (4), we have
\be
\n{f(x(n))(j(n))} > \n{f} r'(n+1) \geq \max \ens{\n{f(s_{< n})(j(n))}, \n{f(s_{< n})(j(n))}}.
\ee
As a consequence, we obtain
\be
\n{f(s)(j(n))} & = & \n{f(s_{< n} + x(n) + s_{> n})(j(n))} = \n{f(s_{< n})(j(n)) + f(x(n))(j(n)) + f(s_{> n})(j(n))} \\
& = & \n{f(x(n))(j(n))} > \n{f} r'(n+1) > r,
\ee
which contradicts $\n{f(s)(j(n))} \leq r$. Thus, the assertion holds.
\end{proof}

We say that $f$ is {\it $k$-linearisable} for a valuation field $k$ if $f$ is $k$-linear with respect to some Banach $k$-vector space structures on $V(i)$ for each $i \in I$ and $W(j)$ for each $j \in J$. For an $I_0 \in \cP(I)$ and a $J_0 \in \cP(J)$, we denote by
\be
f \upharpoonright^{I_0,J_0} \colon \prod (V \upharpoonright (I \setminus I_0)) \to \bigoplus (W \upharpoonright (J \setminus J_0))
\ee
the composite of the zero extension $\prod (V \upharpoonright (I \setminus I_0)) \hookrightarrow \prod V$, $f$, and the canonical projection $\bigoplus W \twoheadrightarrow \bigoplus (W \upharpoonright (J \setminus J_0))$. As an immediate application of Theorem \ref{p-adic Chase's lemma}, we obtain the following operator norm reduction property:

\begin{crl}
\label{operator norm reduction}
If $I$ is countable and $f$ is $k$-linearisable for a valuation field $k$ such that $\v{k}$ is dense in $\R_{\geq 0}$, then there exists an $(I_0,J_0) \in \cP_{< \omega}(I) \times \cP_{< \omega}(J)$ such that $\n{f \upharpoonright^{I_0,J_0}} < \n{f}$.
\end{crl}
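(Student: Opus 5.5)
The plan is to deduce the corollary directly from Theorem \ref{p-adic Chase's lemma}, applied with an arbitrary fixed radius, say $r = 1$, and then to use $k$-linearity to rescale. Since $f$ is non-zero and bounded, $\n{f} > 0$, so $\n{f}^{-1} r$ makes sense. Theorem \ref{p-adic Chase's lemma} yields $(r',I_0,J_0) \in \R_{> \n{f}^{-1} r} \times \cP_{< \omega}(I) \times \cP_{< \omega}(J)$ with $f[\prod^{I_0,r'} V] \subset \bigoplus^{J_0,r} W$. I claim that this pair $(I_0,J_0)$ works.

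First I translate the inclusion into a statement about $f \upharpoonright^{I_0,J_0}$. For $x \in \prod (V \upharpoonright (I \setminus I_0))$ with $\n{x} \leq r'$, its zero extension $\tilde{x}$ lies in $\prod^{I_0,r'} V$. Hence $\n{f(\tilde{x})(j)} \leq r$ for every $j \in J \setminus J_0$. Since the norm of the projection to $\bigoplus (W \upharpoonright (J \setminus J_0))$ is the supremum over $j \in J \setminus J_0$, this gives $\n{f \upharpoonright^{I_0,J_0}(x)} \leq r$ whenever $\n{x} \leq r'$.

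Next I upgrade this bound on a ball to an operator norm bound strictly below $\n{f}$. Fix $r''$ with $\n{f}^{-1} r < r'' < r'$. Fix Banach $k$-vector space structures making $f$ $k$-linear. The zero extension and the canonical projection are then $k$-linear for the induced coordinatewise structures, and these are Banach $k$-vector space structures because the norms are suprema. Therefore $f \upharpoonright^{I_0,J_0}$ is $k$-linear and satisfies $\n{c y} = \v{c} \n{y}$ on both sides.

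Now take a non-zero $x$. By density of $\v{k}$ in $\R_{\geq 0}$, there is a $c \in k^{\times}$ with $r'' < \v{c} \n{x} \leq r'$. Then $\n{cx} \leq r'$, so
\be
\v{c} \n{f \upharpoonright^{I_0,J_0}(x)} = \n{f \upharpoonright^{I_0,J_0}(cx)} \leq r,
\ee
and hence $\n{f \upharpoonright^{I_0,J_0}(x)} \leq r/\v{c} < (r/r'') \n{x}$. This gives $\n{f \upharpoonright^{I_0,J_0}} \leq r/r'' < \n{f}$, using $r'' > \n{f}^{-1} r$.

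There is no real obstacle here. The only point needing care is the strictness of the final inequality. Merely having $\n{cx}$ close to $r'$ would give the bound $r/r'$ only in the limit, which is still $< \n{f}$ but requires an approximation argument. Choosing the intermediate $r''$ strictly between $\n{f}^{-1} r$ and $r'$, and using the density of $\v{k}$ to place $\v{c} \n{x}$ in $(r'', r']$, handles this cleanly. One must also check that the coordinatewise $k$-structures on the bounded product and the completed direct sum satisfy $\n{cv} = \v{c} \n{v}$, which is immediate from the supremum norm.
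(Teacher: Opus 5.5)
Your proposal is correct and is essentially the paper's argument: apply Theorem \ref{p-adic Chase's lemma} with $r = 1$, read the inclusion as a bound for $f \upharpoonright^{I_0,J_0}$ on the ball of radius $r'$, and rescale using $k$-linearity and the density of $\v{k}$. The only difference is cosmetic. The paper places $\v{c}\n{x}$ in $(r' - \epsilon, r']$ and lets $\epsilon \to 0$ to get the bound $(r')^{-1}$, whereas your intermediate $r''$ gives the slightly weaker but sufficient bound $r/r''$ without passing to a limit.
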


\begin{proof}
By Theorem \ref{p-adic Chase's lemma} applied to $r = 1$, there exists an $(r',I_0,J_0) \in \R_{> \n{f}^{-1}} \times \cP_{< \omega}(I) \times \cP_{< \omega}(J)$ such that the inclusion
\be
f \left[ \prod^{I_0,r'} V \right] \subset \bigoplus^{J_0,1} W
\ee
holds. We show $\n{f \upharpoonright^{I_0,J_0}} \leq (r')^{-1}$. It suffices to show $\n{f \upharpoonright^{I_0,J_0}(x)} \leq (r')^{-1} \n{x}$ for any $x \in \prod (V \upharpoonright (I \setminus I_0))$. If $x = 0$, then we have $\n{f \upharpoonright^{I_0,J_0}(x)} = 0 \leq (r')^{-1} \n{x}$. Suppose $x \neq 0$. Let $\epsilon \in \R_{> 0}$. Since $\v{k}$ is dense in $\R_{\geq 0}$, there exists a $c \in k$ such that $(r' - \epsilon) \n{x}^{-1} < \v{c} \leq r' \n{x}^{-1}$. We have $c \neq 0$ and $r' - \epsilon < \n{cx} \leq r'$. By $x \in \prod (V \upharpoonright (I \setminus I_0))$ and $\n{cx} \leq r'$, the image of $cx$ by the zero extension $\prod (V \upharpoonright (I \setminus I_0)) \hookrightarrow \prod V$ belongs to $\prod^{I_0,r'} V$. Therefore, we have $\n{f \upharpoonright^{I_0,J_0}(cx)} \leq 1$, and hence
\be
\n{f \upharpoonright^{I_0,J_0}(x)} = \v{c}^{-1} \n{f \upharpoonright^{I_0,J_0}(xx)} = \v{c}^{-1} \leq (r' - \epsilon)^{-1} \n{x}
\ee
if $r' - \epsilon > 0$. We obtain
\be
\n{f \upharpoonright^{I_0,J_0}(x)} \leq \inf_{\epsilon \in (0,r')} (r' - \epsilon)^{-1} \n{x} \leq \lim_{\epsilon \to +0} (r' - \epsilon)^{-1} \n{x} = (r')^{-1} \n{x}.
\ee
This implies $\n{f \upharpoonright^{I_0,J_0}} \leq (r')^{-1} < \n{f}$.
\end{proof}

%% file: Edas_Theorem.tex
\section{Chase--Dugas--Zimmermann-Huisgen--Eda Theorem}
\label{Chase--Dugas--Zimmermann-Huisgen--Eda Theorem}

Let $I$ be a set. We denote by $\beta_{\omega} I$ the set of ultrafilters of $I$ closed under countable intersection. Let $V \in \BAb^I$. For a $U_0 \in \cP(\beta_{\omega} I)$, we set
\be
\prod^{U_0,r} V & \coloneqq & \set{x \in \prod V}{\forall \cF \in U_0[\set{i \in I}{x(i) = 0} \in \cF] \land \forall i \in I[\n{x(i)} \leq r]}.
\ee
Let $J$ be a set, $W \in \BAb^J$, and $f$ a non-zero bounded group homomorphism $\prod V \to \bigoplus W$. We have a non-Archimedean analogue of Theorem \ref{Eda's theorem}, which is an extension of Theorem \ref{p-adic Chase's lemma}.

\begin{thm}[non-Archimedean Eda's extension of Dugas--Zimmermann-Huisgen's extension of Chase's lemma]
\label{p-adic Eda's theorem}
For any $r \in \R_{> 0}$, there exists an $(r',U_0,J_0) \in \R_{> \n{f}^{-1} r} \times \cP_{< \omega}(\beta_{\omega} I) \times \cP_{< \omega}(J)$ such that the inclusion
\be
f \left[ \prod^{U_0,r'} V \right] \subset \bigoplus^{J_0,r} W
\ee
holds.
\end{thm}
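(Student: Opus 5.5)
We argue by contradiction, following Eda's scheme: Theorem \ref{p-adic Chase's lemma} handles countable index sets, and $\omega_1$-complete ultrafilters absorb the remaining ``measurable'' part of $I$. Fix $r \in \R_{> 0}$ and assume that no triple $(r',U_0,J_0)$ as in the statement exists.

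\textbf{Step 1: the ideal of bad sets.} Put $r'_0 \coloneqq \n{f}^{-1} r + 1$. Call $A \subset I$ \emph{bad} if for every $r' \in (\n{f}^{-1} r, r'_0]$ and every $J_0 \in \cP_{< \omega}(J)$ there is an $x \in \prod V$ with $\n{x} \leq r'$, support in $A$, and $f(x) \notin \bigoplus^{J_0,r} W$. Call $A$ \emph{good} otherwise, so that for a good $A$ the map $f$ restricted to elements supported in $A$ satisfies the conclusion with some finite $J_0$ and some $r'$. Let $\cI$ be the family of good sets. I would first show that $\cI$ is an ideal:
\bi
\item[(a)] $\cI$ is downward closed, which is clear.
\item[(b)] $\cI$ is closed under finite unions. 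Given data $(r'_1,J_1)$ and $(r'_2,J_2)$ for $A_1$ and $A_2$, take $r' = \min \ens{r'_1,r'_2}$ and $J_0 = J_1 \cup J_2$. Split $x$ into its $A_1$-part and its $(A_2 \setminus A_1)$-part and use the ultrametric inequality.
\ei
Moreover, every singleton is good: the single coordinate contributes, via boundedness of $f$ and the definition of $\bigoplus$, only finitely many $j$ with $\n{f(x)(j)} > r$. Strictly this needs a uniform $J_0$, so I would instead apply Theorem \ref{p-adic Chase's lemma} with $I = \ens{i}$, where finite $I$ is trivial.

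\textbf{Step 2: countable completeness.} The key claim is that $\cI$ is closed under countable disjoint unions, i.e.\ it is $\omega_1$-complete. Let $A = \bigsqcup_{n \in \omega} A_n$ with every $A_n$ good, and suppose $A$ is bad. Consider the map $\prod_{n \in \omega} \prod (V \upharpoonright A_n) \cong \prod (V \upharpoonright A) \to \bigoplus W$; this is a bounded homomorphism from a bounded product indexed by the countable set $\omega$. Theorem \ref{p-adic Chase's lemma} applied to it yields a finite $N \subset \omega$, a finite $J_0$, and an $r'$ such that elements supported on $\bigcup_{n \notin N} A_n$ of norm $\leq r'$ land in $\bigoplus^{J_0,r} W$. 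Hence $\bigcup_{n \notin N} A_n$ is good, and by Step 1(b) so is $A$, a contradiction. The only subtlety is that the data $r'$ must stay above $\n{f}^{-1} r$ uniformly; this is exactly what Theorem \ref{p-adic Chase's lemma} guarantees.

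\textbf{Step 3: a finite partition and the ultrafilters.} This is the main obstacle. I would show that $I$ cannot be split into infinitely many pairwise disjoint bad sets. Given bad, pairwise disjoint $B_0, B_1, \ldots$, I would run the recursive construction from the proof of Theorem \ref{p-adic Chase's lemma} with $B_n$ in place of $\omega_{\geq n}$. At stage $n$, choose $x(n)$ supported in $B_n$ with $\n{x(n)} \leq r'(n)$, together with a new $j(n) \notin J'_n$ such that $\n{f(x(n))(j(n))} > r$. Then form $s = \sum_n x(n)$, which makes sense coordinatewise because the supports are disjoint, so $\n{s} \leq r'_0$. The same estimate at some $j(n)$ with $\n{f(s)(j(n))} \leq r$ gives a contradiction, once we control the tail $\sum_{h > n} x(h)$ in norm by $r'(n+1)$.

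Consequently, the Boolean algebra $\cP(I)/\cI$ has no infinite antichain, hence it is finite. Its atoms $C_1,\dots,C_m$ are bad sets $C_k$ such that for every $D \subset C_k$ exactly one of $D$ and $C_k \setminus D$ is good. Thus $\cF_k \coloneqq \set{D \subset I}{C_k \setminus D \in \cI}$ is an ultrafilter. It is non-principal, since singletons are good, and it is $\omega_1$-complete by Step 2, so $\cF_k \in \beta_{\omega} I$.

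Set $U_0 = \ens{\cF_1,\dots,\cF_m}$. The complement $I \setminus \bigcup_k C_k$ is good. For $x \in \prod^{U_0,r'} V$, the zero set of $x$ lies in every $\cF_k$, so the support of $x$ is contained in $\left( I \setminus \bigcup_k C_k \right) \cup \bigcup_k (C_k \cap \mathrm{supp}\,x)$, which is a finite union of good sets and therefore good. This alone gives no uniform $(r',J_0)$. To get uniformity, I would pass to the quotient: the set of all $x$ whose zero sets lie in every $\cF_k$ corresponds to supports in $\cI$. Run the argument of Step 3 once more: if no uniform $(r',J_0)$ works, recursively choose good supports $D_n$, which can be taken pairwise disjoint after subtracting finitely many earlier ones. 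Their union is good by Step 2, and yet by the same diagonal construction it is not good, a contradiction.
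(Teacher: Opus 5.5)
Your proposal is correct and follows essentially the paper's route: an ideal of good sets, countable completeness from Theorem \ref{p-adic Chase's lemma} exactly as in condition (5) of Lemma \ref{cI is cD}, finitely many atoms giving $\omega_1$-complete ultrafilters, and a final diagonal argument over disjoint supports whose union is good. One correction: your good sets carry no finite exceptional set, unlike $\cI_{f,r}$, so singletons need not be good (Theorem \ref{p-adic Chase's lemma} for $\{i\}$ only gives $I_0=\{i\}$, and e.g.\ $I=\{i\}$, $V(i)=\bigoplus W$ with $W(j)=\mathbb{C}_p$, $J$ infinite, $f=\mathrm{id}$ makes $\{i\}$ bad), hence some $\cF_k$ may be principal --- this is harmless since $\beta_{\omega} I$ contains principal ultrafilters, and it even spares you the paper's adjoining of $\iota_I[I_0]$ in the last step.
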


In order to prove Theorem \ref{p-adic Eda's theorem}, we introduce a set $\cD(I)$ which essentially appears in various studies of Chase's lemma (cf.\ \cite{Eda82} \S 1, \cite{Eda83-1} Theorem 1, \cite{BZ16} Proposition 26, \cite{BR25} Lemma 2.3, and so on), and recall basic properties of $\cD(I)$ well-known to experts for the reader's convenience.

\begin{dfn}
We denote by $\cD(I)$ the set of $\cI \in \cP(\cP(I))$ satisfying the following:
\bi
\item[(1)] The relation $I \notin \cI$ holds.
\item[(2)] For any $I' \in \cP_{< \omega}(I)$, the relation $I' \in \cI$ holds.
\item[(3)] For any $(I',I'') \in \cI^2$, the relation $I' \cup I'' \in \cI$ holds.
\item[(4)] For any $I' \in \cI$ and $I'' \in \cP(I')$, the relation $I'' \in \cI$ holds.
\item[(5)] For any pairwise disjoint $U \in \cP_{\leq \omega}(\cP(I))$, there exists a $U_0 \in \cP_{< \omega}(U)$ such that the relation $\bigcup_{I' \in U \setminus U_0} I' \in \cI$ holds.
\ei
\end{dfn}

\begin{lmm}
\label{closed under countable union}
For any $\cI \in \cD(I)$, $\cI$ is an ideal of $I$ closed under countable union.
\end{lmm}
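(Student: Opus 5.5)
Conditions (1)--(4) of the definition of $\cD(I)$ already say that $\cI$ is a proper ideal of $I$ containing every finite subset: it is closed under finite union by (3), downward closed by (4), and does not contain $I$ by (1). So the only thing left to prove is closure under countable union. The plan is to derive it from condition (5) by disjointification.

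Let $(I'_n)_{n \in \omega}$ be a sequence in $\cI$; finite families reduce to this case by repeating the empty set, which lies in $\cI$ by (2). Set
\be
I''_n \coloneqq I'_n \setminus \bigcup_{h \in \omega_{< n}} I'_h .
\ee
The sets $I''_n$ are pairwise disjoint, each lies in $\cI$ by (4), and $\bigcup_{n \in \omega} I''_n = \bigcup_{n \in \omega} I'_n$.

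Condition (5) is phrased for a pairwise disjoint subset $U \in \cP_{\leq \omega}(\cP(I))$, not for a sequence. So take $U \coloneqq \set{I''_n}{n \in \omega}$. This is a countable set of subsets of $I$. Distinct members of $U$ are disjoint, since two indices giving distinct sets cannot both pick the same $I''_n$. Repetitions can only occur among empty sets, and these do not affect the union. Applying (5) gives a $U_0 \in \cP_{< \omega}(U)$ with
\be
\bigcup_{I' \in U \setminus U_0} I' \in \cI .
\ee

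Then
\be
\bigcup_{n \in \omega} I'_n = \bigcup_{I' \in U \setminus U_0} I' \cup \bigcup_{I' \in U_0} I' .
\ee
The first piece lies in $\cI$ by the choice of $U_0$. The second piece is a finite union of members of $\cI$, so it lies in $\cI$ by (3), applied inductively (with $\emptyset \in \cI$ covering the case $U_0 = \emptyset$). One more application of (3) shows that the whole union lies in $\cI$.

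No step is a genuine obstacle. The only care needed is the bookkeeping between sequences and sets in (5), namely that possible repetitions, which can only be empty sets, do not break pairwise disjointness or change the union.
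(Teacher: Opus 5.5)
Your proof is correct and follows essentially the same route as the paper: you disjointify the sequence using condition (4), apply condition (5) to place all but finitely many pieces into $\cI$, and absorb the remaining finitely many pieces with condition (3). Your extra bookkeeping when passing from the sequence to the set $U$ (repetitions can only be empty sets, so pairwise disjointness and the union are unaffected) covers a detail the paper's proof glosses over, and you handle it correctly.
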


\begin{proof}
By the condition (2), we have $\emptyset \in \cI$. By the conditions (1), (3), and (4), $\cI$ is an ideal of $I$. We show that $\cI$ is closed under countable union. Let $I' \in \cI^{\omega}$. Set $S \coloneqq \bigcup_{h \in \omega} I'(n)$. It suffices to show $S \in \cI$. By the condition (4), it is reduced to the case where $I'$ is pairwise disjoint. By the condition (5), there exists an $n \in \omega$ such that $\bigcup_{h \in \omega_{\geq n}} I'(h) \in \cI$. By the condition (3), we obtain
\be
S = \bigcup_{i \in \omega_{< n}} I'(h) \cup \bigcup_{h \in \omega_{\geq n}} I'(h) \in \cI.
\ee
Thus, $\cI$ is closed under countable union.
\end{proof}

The following well-foundedness is the fundamental tool to analyse $\cD(I)$:

\begin{lmm}
\label{well-foundedness}
For any $\cI \in \cD(I)$, the following hold:
\bi
\item[(1)] There does not exist a pairwise disjoint $U \in \cP_{= \omega}(\cP(I) \setminus \cI)$.
\item[(2)] For any $\cM \in \cP(\cP(I) \setminus \cI)$, there exists a maximal pairwise disjoint $U \in \cP(\cM)$.
\ei
\end{lmm}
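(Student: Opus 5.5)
For (1), I will argue by contradiction using condition (5) of the definition of $\cD(I)$ directly. Suppose $U \in \cP_{= \omega}(\cP(I) \setminus \cI)$ is pairwise disjoint. Then $U$ is in particular a pairwise disjoint element of $\cP_{\leq \omega}(\cP(I))$, so condition (5) gives a $U_0 \in \cP_{< \omega}(U)$ with $\bigcup_{I' \in U \setminus U_0} I' \in \cI$. Since $U$ is infinite and $U_0$ is finite, $U \setminus U_0$ is non-empty, so we may pick some $I'' \in U \setminus U_0$. Then $I'' \subset \bigcup_{I' \in U \setminus U_0} I'$, and condition (4) yields $I'' \in \cI$. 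This contradicts $I'' \in U \subset \cP(I) \setminus \cI$. There is no real difficulty here, since the infiniteness of $U$ is exactly what makes $U \setminus U_0$ non-empty.

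For (2), I will apply Zorn's lemma to the set $\Sigma$ of pairwise disjoint subsets of $\cM$, partially ordered by inclusion.
\begin{itemize}
\item $\Sigma$ is non-empty, because $\emptyset \in \Sigma$.
\item Every chain $(U_\lambda)_{\lambda}$ in $\Sigma$ has its union $\bigcup_\lambda U_\lambda$ in $\Sigma$. Indeed, any two distinct members of the union already lie in a common $U_\lambda$, by linearity of the chain, and are therefore disjoint.
\end{itemize}
Zorn's lemma then gives a maximal element $U$ of $\Sigma$, which is by definition a maximal pairwise disjoint $U \in \cP(\cM)$.

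This route uses the axiom of choice, which is consistent with the set-theoretic setting of the paper. It does not need (1) at all. I expect the intended use is to combine (2) with (1): since $\cM \subset \cP(I) \setminus \cI$, part (1) shows that any such maximal $U$ is automatically finite.

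If one wants to avoid Zorn's lemma, an alternative is to take a well-ordering of $\cM$ and greedily add each set disjoint from all previously chosen ones. With (1), this process can only add finitely many sets. Either way, the only subtle point in the whole lemma is making sure (1) is applied to a family that is genuinely infinite, and that is immediate.
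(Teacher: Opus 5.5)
Your proof of (1) is exactly the paper's: apply condition (5) to the infinite pairwise disjoint family, pick a member of $U \setminus U_0$, and use condition (4).

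For (2) your route is different. The paper only remarks that (2) ``immediately follows from (1)''. The intended argument is presumably this: extend a pairwise disjoint subfamily of $\cM$ one set at a time. If this never stopped, the chosen sets would form a pairwise disjoint member of $\cP_{= \omega}(\cP(I) \setminus \cI)$, contradicting (1). So after finitely many steps you reach a finite family that no single element of $\cM$ extends, and such a family is maximal.

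Your Zorn's lemma argument is correct and more general, since it works for any family of sets and never uses $\cI$. However, it invokes the full axiom of choice. The paper's route needs only a recursion of length $\omega$ (dependent choice), and it delivers the finiteness of the maximal family at the same time. That finiteness is exactly what is used later in Lemma \ref{finite maximal pairwise disjoint system}.

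One small correction to your closing remark: the ``alternative'' via a well-ordering of $\cM$ does not really avoid Zorn's lemma. The well-ordering theorem is equivalent to it, and, as with Zorn, (1) plays no role in obtaining maximality there. The way to genuinely exploit (1) is the finite greedy extension described above.
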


\begin{proof}
The assertion (2) immediately follows from the assertion (1). Assume that there exists a pairwise disjoint $U \in \cP_{= \omega}(\cP(I) \setminus \cI)$. By the condition (5), there exists a $U_0 \in \cP_{< \omega}(U)$ such that $\bigcup_{I' \in U \setminus U_0} I' \in \cI$. By $\# U = \omega > \# U_0$, $U \setminus U_0$ has an element $I''$. We have $I'' \subset \bigcup_{I' \in U \setminus U_0} I'$, and hence $I'' \in \cI$ by the condition (4), which contradicts $I'' \in U \subset \cP(I) \setminus \cI$.
\end{proof}

\begin{lmm}
\label{countably complete measure}
For any $\cI \in \cD(I)$, there exists an $M \in \cP(I) \setminus \cI$ such that for any $I' \in \cP(M)$, either $I' \in \cI$ or $M \setminus I' \in \cI$ holds.
\end{lmm}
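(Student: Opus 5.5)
We argue by contradiction, using Lemma \ref{well-foundedness} to build a splitting tree of countable height that cannot exist. Call $M \in \cP(I) \setminus \cI$ \emph{atomic} if for every $I' \in \cP(M)$ either $I' \in \cI$ or $M \setminus I' \in \cI$. Suppose no atomic set exists. Then every $M \in \cP(I) \setminus \cI$ splits as a disjoint union $M = M_0 \sqcup M_1$ with $M_0, M_1 \notin \cI$. We will produce a pairwise disjoint countably infinite family in $\cP(I) \setminus \cI$, contradicting Lemma \ref{well-foundedness} (1).

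The naive construction runs as follows. Start with $M^{(0)} \coloneqq I$, which lies outside $\cI$ by condition (1). Split it as $M^{(0)} = A_0 \sqcup M^{(1)}$ with both pieces outside $\cI$. Then split $M^{(1)} = A_1 \sqcup M^{(2)}$, again with both pieces outside $\cI$, and continue. This recursion uses only dependent choice and never gets stuck, because the non-atomicity hypothesis applies to each $M^{(n)} \notin \cI$. The sets $A_0, A_1, A_2, \dots$ are pairwise disjoint: $A_n \subset M^{(n)}$, and $M^{(n+1)}$ is disjoint from $A_n$, so $A_m \subset M^{(m)} \subset M^{(n+1)}$ is disjoint from $A_n$ for every $m > n$. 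All of them lie outside $\cI$.

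Setting $U \coloneqq \ens{A_n}_{n \in \omega}$ therefore gives a pairwise disjoint element of $\cP_{= \omega}(\cP(I) \setminus \cI)$. Note that the $A_n$ are distinct, since they are disjoint and nonempty; nonemptiness holds because $\emptyset \in \cI$. This contradicts Lemma \ref{well-foundedness} (1). Hence some atomic $M$ exists, which is the assertion.

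I expect no serious obstacle. The one point to verify is that the recursion only ever requires splitting a set outside $\cI$, which is exactly what the negated statement provides. We do not need the more elaborate maximal-disjoint-family argument of Lemma \ref{well-foundedness} (2). Lemma \ref{closed under countable union} is not needed here either. Presumably it is used afterwards to show that $\set{I' \subset I}{M \setminus I' \in \cI}$ is an $\omega_1$-complete ultrafilter, that is, an element of $\beta_{\omega} I$.
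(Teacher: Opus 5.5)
Your proof is correct and matches the paper's argument. The paper also assumes no such $M$ exists, recursively splits off sets $I'(n) \notin \cI$ from $I$ while keeping the remainder $I \setminus \bigcup_{h \leq n} I'(h)$ outside $\cI$, and derives a contradiction with Lemma \ref{well-foundedness} (1); your explicit check that the $A_n$ are distinct (they are nonempty because $\emptyset \in \cI$) fills in a small detail the paper leaves implicit.
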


The proof is essentially identical to that of \cite{BR25} Lemma 2.3, which is reduced to arguments in \cite{DZH82} Theorem 2 and \cite{BZ16} Proposition 26.

\begin{proof}
Assume the non-existence of such an $M$. We construct an $I' \in (\cP(I) \setminus \cI)^{\omega}$ in a recursive way so that for any $n \in \omega$, the relation $I \setminus \bigcup_{h \in \omega_{< n}} I'(h) \notin \cI$ holds.

\vs
By the condition (1), we have $I \in \cP(I) \setminus \cI$. By $I \in \cP(I) \setminus \cI$ and the hypothesis, there exists an $I'_0 \in \cP(I')$ such that neither $I'_0 \in \cI$ nor $I \setminus I'_0 \in \cI$ holds. We define $I'(0) \coloneqq I'_0$, and have constructed $I' \upharpoonright \omega_{\geq 0}$ with
\be
I \setminus \bigcup_{h \in \omega_{\leq 0}} I'(h) = I \setminus I'_0 \notin \cI.
\ee
Suppose that we have constructed $I' \upharpoonright \omega_{\geq n} \in (\cP(I) \setminus \cI)^{\omega_{\geq 0}}$ for an $n \in \omega$ so that for any $n' \in \omega_{\leq n}$, the relation $I \setminus \bigcup_{h \in \omega_{< n'}} I'(h) \notin \cI$ holds. Set $S \coloneqq I \setminus \bigcup_{h \in \omega_{< n}} I'(h)$. By $S \in \cP(I) \setminus \cI$ and the hypothesis, there exists an $I'_{n+1} \in \cP(S)$ such that neither $I'_{n+1} \in \cI$ nor $S \setminus I'_{n+1} \in \cI$ holds. We define $I'(n+1) \coloneqq I'_{n+1}$, and have constructed $I' \upharpoonright \omega_{\geq n+1}$ with
\be
I \setminus \bigcup_{h \in \omega_{\leq n+1}} I'(h) = S \setminus I'_{n+1} \notin \cI.
\ee
We have recursively constructed a desired $I' \in (\cP(I) \setminus \cI)^{\omega}$. By the construction, $I'$ is pairwise disjoint. This contradicts Lemma \ref{well-foundedness} (1).
\end{proof}

\begin{lmm}
\label{finite maximal pairwise disjoint system}
For any $\cI \in \cD(I)$, there exists a pairwise disjoint $U_0 \in \cP_{< \omega}(\cP(I) \setminus \cI)$ satisfying the following:
\bi
\item[(1)] For any $M \in U_0$ and any $I' \in \cP(M)$, either $I' \in \cI$ or $M \setminus I' \in \cI$ holds.
\item[(2)] The inclusion $\cP(I \setminus \bigcup_{M \in U_0} M) \subset \cI$ holds.
\ei
\end{lmm}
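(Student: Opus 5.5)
We take a maximal pairwise disjoint family of ``atoms'' of $\cI$ and show that it is finite and covers $I$ up to an element of $\cI$. Set
\be
\cM \coloneqq \set{M \in \cP(I) \setminus \cI}{\forall I' \in \cP(M)[I' \in \cI \lor M \setminus I' \in \cI]}.
\ee
By Lemma \ref{well-foundedness} (2), applied to $\cM \subset \cP(I) \setminus \cI$, there exists a maximal pairwise disjoint $U_0 \in \cP(\cM)$. Since $U_0$ is a pairwise disjoint subset of $\cP(I) \setminus \cI$, Lemma \ref{well-foundedness} (1) forbids $\# U_0 = \omega$. Any infinite set has a countably infinite subset, which is again pairwise disjoint and contained in $\cP(I) \setminus \cI$, so $U_0$ must be finite. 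Condition (1) of the statement then holds by the definition of $\cM$.

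It remains to verify condition (2). Set $S \coloneqq I \setminus \bigcup_{M \in U_0} M$, and assume for contradiction that $\cP(S) \not\subset \cI$. By condition (4) of the definition of $\cD(I)$, this is equivalent to $S \notin \cI$. We restrict $\cI$ to $S$ by setting $\cI_S \coloneqq \cI \cap \cP(S)$, and check that $\cI_S \in \cD(S)$.
\bi
\item[(1)] $S \notin \cI_S$ holds by assumption.
\item[(2)--(4)] Finite subsets, unions, and subsets are inherited directly from $\cI$.
\item[(5)] Given a pairwise disjoint countable $U \subset \cP(S)$, it is a pairwise disjoint countable subset of $\cP(I)$. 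So condition (5) for $\cI$ yields a finite $U' \subset U$ with $\bigcup_{I' \in U \setminus U'} I' \in \cI$, and this union is a subset of $S$, hence lies in $\cI_S$.
\ei

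Lemma \ref{countably complete measure}, applied to $\cI_S \in \cD(S)$, yields an $M \in \cP(S) \setminus \cI_S$ such that for every $I' \in \cP(M)$, either $I' \in \cI_S$ or $M \setminus I' \in \cI_S$. Since $M \subset S$, we have $\cI_S \cap \cP(M) = \cI \cap \cP(M)$, so $M \in \cM$. Moreover $M$ is disjoint from every member of $U_0$, because $M \subset S$. Hence $U_0 \cup \ens{M}$ is a pairwise disjoint subset of $\cM$ strictly larger than $U_0$, contradicting maximality. Therefore $S \in \cI$, and condition (4) gives $\cP(S) \subset \cI$.

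The only step needing care is the restriction $\cI_S \in \cD(S)$, in particular condition (5). It works because pairwise disjointness and the unions are computed inside $S \subset I$. Everything else is bookkeeping with Lemma \ref{well-foundedness} and Lemma \ref{countably complete measure}.
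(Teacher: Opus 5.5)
Your proof is correct, and its overall structure matches the paper's. Both arguments take a maximal pairwise disjoint family $U_0$ of atoms via Lemma \ref{well-foundedness} (2), obtain finiteness from Lemma \ref{well-foundedness} (1), and show that the complement of $\bigcup_{M \in U_0} M$ lies in $\cI$ by contradicting maximality.

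The two routes differ only in that last step. The paper takes an arbitrary $I' \subset I \setminus \bigcup_{M \in U_0} M$ with $I' \notin \cI$ and re-runs, directly inside $I'$, the splitting recursion from the proof of Lemma \ref{countably complete measure}. Maximality guarantees that no non-$\cI$ subset of the complement is an atom, so the recursion never stops. This yields an infinite pairwise disjoint sequence of sets outside $\cI$, contradicting Lemma \ref{well-foundedness} (1).

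You instead use downward closure to reduce to the single set $S$, check that the trace $\cI \cap \cP(S)$ belongs to $\cD(S)$, and then invoke Lemma \ref{countably complete measure} as a black box to produce an atom inside $S$. Your version is more modular and avoids repeating the recursion. The cost is the easy verification that the axioms of $\cD$, notably (5), pass to the trace ideal on a subset. The paper's version is self-contained at that point but duplicates the earlier argument.
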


The proof is parallel to that of \cite{Eda82} Lemma 2 on Boolean power.

\begin{proof}
We denote by $\cM$ the set of an $M \in \cP(I) \setminus \cI$ such that for any $I' \in \cP(M)$, either $I' \in \cI$ or $M \setminus I' \in \cI$ holds. By Lemma \ref{well-foundedness} (2), there exists a maximal pairwise disjoint $U_0 \in \cP(\cM)$. By Lemma \ref{well-foundedness} (1), $U_0$ is finite. Set $S \coloneqq \bigcup_{I' \in U_0} I'$.

\vs
It suffices to show $\cP(I \setminus S) \subset \cI$. Let $I' \in \cP(I \setminus S)$. We show $I' \in \cI$. Assume $I' \notin \cI$. We construct a pairwise disjoint $I'' \in (\cP(I \setminus S) \setminus (\cI \cup \cM))^{\omega}$ in a recursive way so that for any $n \in \omega$, the relation $I' \setminus \bigcup_{h \in \omega_{\leq n}} I''(h) \notin \cI \cup \cM$ holds.

\vs
By $I' \subset I \setminus S$ and the maximality of $U_0$, we have $I' \notin \cM$. By $I' \notin \cI$ and $I' \notin \cM$, there exists an $I''_0 \in \cP(I')$ such that neither $I''_0 \in \cI$ nor $I' \setminus I''_0 \in \cI$ holds. By $I' \setminus I''_0 \subset I \setminus S$ and the maximality of $U$, we have $I' \setminus I''_0 \notin \cM$. We define $I''(0) \coloneqq I''_0$, and have constructed $I'' \upharpoonright \omega_{\leq 0}$ with
\be
I' \setminus \bigcup_{h \in \omega_{\leq 0}} I''(h) = I' \setminus I''_0 \notin \cI \cup \cM.
\ee
Suppose that we have constructed $I'' \upharpoonright \omega_{\leq n}$ for an $n \in \omega$ so that for any $n' \in \omega_{\leq n}$, the relation $I' \setminus \bigcup_{h \in \omega_{\leq n'}} I''(h) \notin \cI \cup \cM$ holds. Set $S' \coloneqq I' \setminus \bigcup_{h \in \omega_{\leq n}} I''(h)$. By $S' \notin \cI$ and $S' \notin \cM$, there exists an $I''_{n+1} \in \cP(S')$ such that neither $I''_{n+1} \in \cI$ nor $S' \setminus I''_{n+1} \in \cI$ holds. By $S' \setminus I''_{n+1} \subset I' \subset I \setminus S$ and the maximality of $U$, we have $S' \setminus I''_{n+1} \notin \cM$. We define $I''(n+1) \coloneqq I''_{n+1}$, and have constructed $I'' \upharpoonright \omega_{\leq n+1}$ with
\be
I' \setminus \bigcup_{h \in \omega_{\leq n+1}} I''(h) = S' \setminus I''_{n+1} \notin \cI \cup \cM.
\ee
We have recursively constructed a pairwise disjoint $I'' \in (\cP(I \setminus S) \setminus (\cI \cup \cM))^{\omega}$. This contradicts Lemma \ref{well-foundedness} (1). This implies $I' \in \cI$. We conclude $\cP(I \setminus S) \subset \cI$.
\end{proof}

For a set $I'$, an $I'' \in \cP(I')$, a $V' \in \BAb^{I''}$, an $I''_0 \in \cP(I'')$, and an $r' \in \R_{> 0}$, we set
\be
V' \upharpoonright_{I''}^{I''_0,r'} \coloneqq \prod^{I''_0 \cup (I' \setminus I''),r'} V'.
\ee
For an $r \in \R_{> 0}$, we denote by $\cI_{f,r}$ the set of subsets $I' \subset I$ such that there exists an $(r',I'_0,J_0) \in \R_{> \n{f}^{-1} r} \times \cP_{< \omega}(I) \times \cP_{< \omega}(J)$ such that the inclusion
\be
f \left[ V \upharpoonright_{I'}^{I'_0,r'} \right] \subset \bigoplus^{J_0,r} W
\ee
holds.

\begin{lmm}
\label{cI is cD}
For any $r \in \R_{> 0}$, if $I \notin \cI_{f,r}$, then the relation $\cI_{f,r} \in \cD(I)$ holds.
\end{lmm}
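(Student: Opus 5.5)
The plan is to check the five conditions in the definition of $\cD(I)$ one at a time for $\cI_{f,r}$. Throughout, $V \upharpoonright_{I'}^{I'_0,r'}$ is read as the set of $x \in \prod V$ with $\n{x} \leq r'$ that vanish on $I'_0 \cup (I \setminus I')$. So $I' \in \cI_{f,r}$ means: there are $r' > \n{f}^{-1} r$ and finite sets $I'_0$, $J_0$ such that every $x$ of norm at most $r'$ supported in $I' \setminus I'_0$ satisfies $\n{f(x)(j)} \leq r$ for all $j \notin J_0$.

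\textbf{Conditions (1)--(4).} These are routine.
\begin{itemize}
\item[(1)] This is exactly the hypothesis $I \notin \cI_{f,r}$.
\item[(2)] For finite $I'$, the witness $(\n{f}^{-1} r + 1, I', \emptyset)$ works, since the domain is then $\ens{0}$.
\item[(4)] If $I'' \subset I'$, the witness for $I'$ also works for $I''$, because $V \upharpoonright_{I''}^{I'_0,r'} \subset V \upharpoonright_{I'}^{I'_0,r'}$.
\item[(3)] Take witnesses $(r'_1,I_1,J_1)$ for $I'$ and $(r'_2,I_2,J_2)$ for $I''$, and use $(\min \ens{r'_1,r'_2}, I_1 \cup I_2, J_1 \cup J_2)$. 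Any admissible $x$ supported in $(I' \cup I'') \setminus (I_1 \cup I_2)$ splits as $x = x_1 + x_2$. Here $x_1$ is the restriction of $x$ to $I'$ and $x_2$ the restriction to $I'' \setminus I'$, both extended by zero. Each $x_k$ has norm at most $\n{x}$ and is admissible for its own witness. For $j \notin J_1 \cup J_2$ the ultrametric inequality then gives $\n{f(x)(j)} \leq \max \ens{\n{f(x_1)(j)},\n{f(x_2)(j)}} \leq r$.
\end{itemize}

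\textbf{Condition (5).} This is the main obstacle, and I would handle it by rerunning the proof of Theorem \ref{p-adic Chase's lemma}. If $U$ is finite, take $U_0 = U$: the union over $U \setminus U_0$ is empty, and the empty set lies in $\cI_{f,r}$ by (2). So assume $U$ is countably infinite and enumerate it as $U = \ens{A(n)}_{n \in \omega}$. Put $B(n) \coloneqq \bigcup_{h \in \omega_{\geq n}} A(h)$. Suppose for contradiction that $B(n) \notin \cI_{f,r}$ for every $n$. I only need this failure with $I'_0 = \emptyset$. That is, for every $r' > \n{f}^{-1} r$ and every finite $J_0$, there is $x$ with $\n{x} \leq r'$, supported in $B(n)$, and with $f(x) \notin \bigoplus^{J_0,r} W$.

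\textbf{The recursion.} Now repeat the recursion of Theorem \ref{p-adic Chase's lemma} verbatim, with the condition ``$x(n)$ vanishes on $\omega_{<n}$'' replaced by ``$x(n)$ is supported in $B(n)$''. This produces $j \in J^{\omega}$, a non-increasing $r' \in \R^{\omega}$ with $r'(n) > \n{f}^{-1} r$, and $x(n)$ of norm at most $r'(n)$ supported in $B(n)$. They satisfy $\n{f(x(n))(j(n))} \geq \n{f} r'(n+1)$, and $\n{f(x(h))(j(n))} \leq r$ for $h < n$. The set $J'_{n+1}$ is finite by the definition of the completed direct sum.

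\textbf{The limit element.} Since the $A(m)$ are pairwise disjoint, a point $i \in A(m)$ lies in $B(h)$ only for $h \leq m$. So define $s(i) \coloneqq \sum_{h \in \omega_{\leq m}} x(h)(i)$ for $i \in A(m)$, and $s(i) \coloneqq 0$ for $i \notin \bigcup U$. Then $\n{s} \leq r'(0)$, and $j$ is injective, so $\n{f(s)(j(n))} \leq r$ for some $n$. Split $s = s_{<n} + x(n) + s_{>n}$. The tail $s_{>n}$ vanishes off $B(n+1)$, and at $i \in A(m)$ it equals $\sum_{h=n+1}^{m} x(h)(i)$, so $\n{s_{>n}} \leq r'(n+1)$. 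The same ultrametric estimate as in the proof of Theorem \ref{p-adic Chase's lemma} gives $\n{f(s)(j(n))} = \n{f(x(n))(j(n))} > r$, a contradiction. Hence some $B(n) \in \cI_{f,r}$, and $U_0 \coloneqq \ens{A(h)}_{h \in \omega_{<n}}$ satisfies condition (5).

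The only genuinely new point compared with Theorem \ref{p-adic Chase's lemma} is checking that $s$ is well defined and bounded. This is exactly where pairwise disjointness of $U$ enters.
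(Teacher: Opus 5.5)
Your treatment of (1)--(4) matches the paper's. For (5) you are unwinding what the paper does in one line. The paper regroups $V$ along $U$ as $V'(A) \coloneqq \prod (V \upharpoonright A)$ for $A \in U$. Using disjointness, it identifies $\prod V'$ with the elements of $\prod V$ vanishing off $\bigcup U$. It then applies Theorem~\ref{p-adic Chase's lemma} to $f' \coloneqq f \upharpoonright \prod V'$ over the countable index set $U$, and uses $\n{f'} \leq \n{f}$ to get $r'_0 > \n{f}^{-1} r$ and the witness $(r'_0,\emptyset,J'_0)$. Your inline version is self-contained and needs neither the identification nor the separate case $f' = 0$. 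The paper's version is shorter and reuses the theorem.

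However, the final step of your recursion does not close as written, and the fault is in the estimate you import. With $r'(n+1) = \min \ens{r'(n), \n{f}^{-1} \n{f(x(n))(j(n))}}$ you always have $\n{f(x(n))(j(n))} \leq \n{f} \n{x(n)} \leq \n{f} r'(n)$, so in fact $\n{f} r'(n+1) = \n{f(x(n))(j(n))}$ exactly. Your bound $\n{s_{>n}} \leq r'(n+1)$ therefore only gives $\n{f(s_{>n})(j(n))} \leq \n{f(x(n))(j(n))}$. With a non-strict inequality the ultrametric property does not yield $\n{f(s)(j(n))} = \n{f(x(n))(j(n))}$, because the tail term may cancel the middle one. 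The proof of Theorem~\ref{p-adic Chase's lemma} asserts the strict inequality $\n{f(x(n))(j(n))} > \n{f} r'(n+1)$ at this point, which its condition (4) does not provide; citing ``the same ultrametric estimate'' inherits that slip. The paper's own proof of the lemma rests on the same point through its appeal to the theorem.

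The repair is to leave a margin. Choose $r'(n+1)$ strictly between $\n{f}^{-1} r$ and $\n{f}^{-1} \n{f(x(n))(j(n))}$, for example their midpoint. This is possible because $\n{f(x(n))(j(n))} > r$, and it keeps $r'$ non-increasing and $> \n{f}^{-1} r$, so the recursion still runs. It then gives $\n{f(s_{>n})(j(n))} \leq \n{f} r'(n+1) < \n{f(x(n))(j(n))}$ and $\n{f(s_{<n})(j(n))} \leq r < \n{f(x(n))(j(n))}$. Hence $\n{f(s)(j(n))} = \n{f(x(n))(j(n))} > r$, which is the contradiction you want. With that change your argument is complete, and the same change repairs the proof of Theorem~\ref{p-adic Chase's lemma}.
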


\begin{proof}
We show the conditions (1) -- (5) for $\cI_{f,r} \in \cD(I)$. The assumption ensures (1). Concerning (2), for any $I' \in \cP_{< \omega}(I)$, by $V \upharpoonright_{I'}^{I',\n{f}^{-1} r + 1} = \ens{0}$, we have $I' \in \cI_{f,r}$. Concerning (4), for any $I' \in \cI_{f,r}$ and any $I'' \in \cP(I')$, we have $\prod^{(I'_0 \cap I'') \cup (I \setminus I''),r'} V \subset \prod^{I'_0 \cup (I \setminus I'),r'} V$ for any $I'_0 \in \cP_{< \omega}(I')$ and hence $I'' \in \cI_{f,r}$.

\vs
We show (3). Let $I' \in \cI_{f,r}^2$. By the definition of $\cI_{f,r}$, for each $h \in \ens{0,1}$, there exists an $(r'_h,I'_h,J_h) \in \R_{> \n{f}^{-1} r} \times \cP_{< \omega}(I) \times \cP_{< \omega}(J)$ such that the inclusion
\be
f \left[ V \upharpoonright_{I'(h)}^{I'_h,r'_h} \right] \subset \bigoplus^{J_h,r} W
\ee
holds. Set $r' \coloneqq \min \ens{r'_0,r'_1}$, $I'' \coloneqq I'_0 \cup I'_1$, and $J' \coloneqq J_0 \cup J_1$. Then we have
\be
f \left[ V \upharpoonright_{I'(0) \cup I'(1)}^{I'',r''} \right] \subset f \left[ \sum_{h \in \ens{0,1}} V \upharpoonright_{I'(h)}^{I'_h,r'_h} \right] \subset \sum_{h \in \ens{0,1}} f \left[ V \upharpoonright_{I'(h)}^{I'_h,r'_h} \right] \subset \sum_{h \in \ens{0,1}} \bigoplus^{J_h,r} W \subset \bigoplus^{J',r} W,
\ee
and hence $I'(0) \cup I'(1) \in \cI_{f,r}$. We have shown (3).

\vs
We show (5). Let $U \in \cP_{\leq \omega}(\cP(I))$, and suppose that $U$ is pairwise disjoint. It suffices to show that there exists a $U_0 \in \cP_{< \omega}(U)$ such that $\bigcup_{I' \in U \setminus U_0} I' \in \cI_{f,r}$. Set $S \coloneqq \bigcup_{I' \in U} I'$. We define a $V' \in \BAb^U$ by
\be
V'(I') \coloneqq \prod (V \upharpoonright I') \in \BAb.
\ee
Through the natural identification of $\prod V'$ and $\set{v \in \prod V}{\forall i \in I \setminus S[v(i) = 0]}$, we regard $\prod V'$ as a closed subgroup of $\prod V$. Set $f' \coloneqq f \upharpoonright \prod V'$. If $f'$ is zero, then $U_0 = \emptyset$ satisfies the desired condition, i.e.\ $S \in \cI_{f,r}$, because $(r',I'_0,J_0) = (\n{f}^{-1} r + 1,\emptyset,\emptyset)$ satisfies the desired condition. Therefore, it is reduce to the case where $f'$ is not zero.

\vs
By Theorem \ref{p-adic Chase's lemma} applied to $f'$, there exists an $(r'_0,U_0,J'_0) \in \R_{> \n{f'}^{-1} r} \times \cP_{< \omega}(U) \times \cP_{< \omega}(J)$ such that the inclusion
\be
f' \left[ V' \upharpoonright_{U}^{U_0,r'_0} \right] \subset \bigoplus^{J'_0,r} W
\ee
holds. By $\n{f'} \leq \n{f}$, we have $r'_0 > \n{f}^{-1} r$. Set $I' \coloneqq \bigcup_{I'' \in U \setminus U_0} I''$. We have
\be
f \left[ V \upharpoonright_{I'}^{\emptyset,r'_0} \right] = f' \left[ V' \upharpoonright_{U}^{U_0,r'_0} \right] \subset \bigoplus^{J'_0,r} W,
\ee
and hence $I' \in \cI_{f,r}$. We have shown (5). Thus, we obtain $\cI_{f,r} \in \cD(I)$.
\end{proof}

We go back to Theorem \ref{p-adic Eda's theorem}. For any $i \in I$, the principal ultrafilter of $I$ associated to $i$ is closed under intersection, and hence is an element of $\beta_{\omega} I$. We denote by $\iota_I$ the canonical embedding $I \hookrightarrow \beta_{\omega} I$.

\begin{proof}[Proof of Theorem \ref{p-adic Eda's theorem}]
If $I \in \cI_{f,r}$, then the assertion holds because every principal ultrafilter is $\omega_1$-complete. Therefore, it is reduced to the case $I \notin \cI_{f,r}$, i.e.\ $\cI_{f,r} \in \cD(I)$ by Lemma \ref{cI is cD}. By Lemma \ref{finite maximal pairwise disjoint system}, there exists a pairwise disjoint $U \in \cP_{< \omega}(\cP(I) \setminus \cI_{f,r})$ satisfying the following:
\bi
\item[(1)] For any $M \in U$ and any $I' \in \cP(M)$, either $I' \in \cI_{f,r}$ or $M \setminus I' \in \cI_{f,r}$ holds.
\item[(2)] The inclusion $\cP(I \setminus \bigcup_{M \in U} M) \subset \cI_{f,r}$ holds.
\ei
We define an $\cF \in \cP(\cP(I))^U$ by
\be
\cF(M) \coloneqq \set{I' \in \cP(I)}{M \setminus I' \in \cI_{f,r}}.
\ee
Let $M \in U$. By Lemma \ref{closed under countable union}, $\cI_{f,r}$ is an ideal of $I$ closed under countable union, and hence $\cF(M)$ is a filter of $I$ closed under countable intersection. By the condition (1) of $U$, $\cF(M)$ is an ultrafilter. This implies $\cF \in (\beta_{\omega} I)^U$.

\vs
Set $S \coloneqq \bigcup_{M \in U} M$. By the condition (2) of $U$, we have $I \setminus S \in \cI_{f,r}$, and hence there exists an $(r',I_0,J_0) \in \R_{> \n{f}^{-1} r} \times \cP_{< \omega}(I \setminus S) \times \cP_{< \omega}(J)$ such that the inclusion
\be
f \left[ V \upharpoonright_{I \setminus S}^{I_0,r'} V \right] \subset \bigoplus^{J_0,r} W
\ee
holds. Set $U_0 \coloneqq \cF[U]$. We show that for any $r' \in \R_{> \n{f}^{-1} r}$ and any $x \in \prod^{U_0,r'} V$, the relation $\set{i \in I}{x(i) \neq 0} \in \cI_{f,r}$ holds. Set $I' \coloneqq \set{i \in I}{x(i) \neq 0}$. For any $M \in U$, we have $I \setminus I' \in \cF(M)$ by $x \in \prod^{U_0,r'} V$, and hence $M \cap I' = M \setminus (I \setminus I') \in \cI_{f,r}$. By the condition (3) of $\cI_{f,r} \in \cD(I)$, we obtain
\be
I' = \left( I' \setminus S \right) \cup \bigcup_{M \in U} (M \cap I') \in \cI_{f,r}.
\ee
For an $I_0 \in \cP(I)$, and an $r' \in \R_{> 0}$, we set
\be
\prod^{U_0,I_0,r'} V \coloneqq \prod^{U_0 \cup \iota_I[I_0],r'} V.
\ee
By $U_0 \cup \iota_I[I_0] \in \cP_{< \omega}(\beta_{\omega} I)$ for any $I_0 \in \cP_{< \omega}(I)$, it suffices to show that there exists an $(r',I_0,J_0) \in \R_{> \n{f}^{-1} r} \times \cP_{< \omega}(I) \times \cP_{< \omega}(J)$ such that the inclusion
\be
f \left[ \prod^{U_0,I_0,r'} V\right] \subset \bigoplus^{J_0,r} W
\ee
holds. Assume the non-existence of such an $(r',I_0,J_0)$. We construct a sequence $x \in (\prod V)^{\omega}$ in a recursive way so that $x$ and the sequences $r' \in \R_{> \n{f}^{-1} r}^{\omega}$, $I' \in \cP_{< \omega}(I)^{\omega}$, and $J' \in \cP_{< \omega}(J)^{\omega}$ defined by
\be
r'(n) & \coloneqq & \n{f}^{-1} r + 2^{-n} \\
I'(n) & \coloneqq & \set{i \in I}{\n{x(n)(i)} > \n{f}^{-1} r} \\
J'(n) & \coloneqq & \set{j \in J}{\n{f(x(n))(j)} > r}
\ee
satisfy the following conditions:
\bi
\item[(1)] For any $n \in \omega$, the inequalities $\n{x(n)} \leq \n{f}^{-1} r + 2^{-n}$ and $\n{f(x(n))} > r$ hold.
\item[(2)] The sequence $I'$ is pairwise disjoint, and satisfies $I'(n) = \set{i \in I}{x(n)(i) \neq 0}$ for any $n \in \omega$.
\item[(3)] The sequence $J'$ satisfies $J'(n) \setminus \bigcup_{h \in \omega_{< n}} J'(h) \neq \emptyset$ for any $n \in \omega$.
\ei
Suppose that we have constructed $x \upharpoonright \omega_{< n}$ for an $n \in \omega$ so that the conditions (1) -- (3) restricted to it hold. Set $S_n \coloneqq \bigcup_{h \in \omega_{< n}} I'(h)$ and $T_n \coloneqq \bigcup_{h \in \omega_{< n}} J'(h)$. By the hypothesis, there exists an $x_n \in \prod^{U_0,S_n,r'(n)} V$ such that $f(x_n) \notin \bigoplus^{T_n,r} W$. Replacing $x_n(i)$ by $0 \in V(i)$ for each $i \in I$ with $\n{x_n(i)} \leq \n{f}^{-1} r$, we may assume $\set{i \in I}{0 < \n{f(x(n))} \leq \n{f}^{-1} r} = \emptyset$. We define $x(n) \coloneqq x_n$, and have constructed $x \upharpoonright \omega_{< n + 1}$ so that the conditions (1) -- (3) restricted to them hold.

\vs
We have recursively constructed a desired $x$. Set $I'' \coloneqq \bigcup_{h \in \omega} I'(h)$. Since $\cI_{f,r}$ is closed under countable union, we have $I'' \in \cI_{f,r}$. Therefore, there exists an $(r'',I''_0,J''_0) \in \R_{> \n{f}^{-1} r} \times \cP_{< \omega}(I'') \times \cP_{< \omega}(J)$ such that the inclusion
\be
f \left[ V \upharpoonright_{I''}^{I''_0,r''} V \right] \subset \bigoplus^{J''_0,r} W
\ee
holds. By $r'' > \n{f}^{-1} r$, $\# I''_0 < \omega$, and $\# J''_0 < \omega$, there exists an $n \in \omega$ such that $r'' \geq \n{f}^{-1} r + 2^{-n}$, $I''_0 \cap I'(n) = \emptyset$, and $J'(n) \setminus J''_0 \neq \emptyset$. We have $x(n) \in V \upharpoonright_{I''}^{I''_0,r''}$ and hence $f(x(n)) \in \bigoplus^{J''_0,r} W$, which contradicts
\be
\set{j \in J}{\n{f(x(n))(j)} > r} \setminus J''_0 = J'(n) \setminus J''_0 \neq \emptyset.
\ee
Thus, the hypothesis is false.
\end{proof}

An ultrafilter $\cF$ is said to be {\it $\lambda$-complete} for a cardinal number $\lambda$ if $\bigcap_{U \in F} U \in \cF$ for any $F \in \cP_{< \lambda}(\cF) \setminus \ens{\emptyset}$. A set $X$ is said to be {\it $\lambda$-measurable} for a cardinal number $\lambda$ if $X$ is uncountable and admits a $\lambda$-complete non-principal ultrafilter. A cardinal number $\mu$ is said to be {\it measurable} if $\mu$ is $\mu$-measurable. We denote by $\omega_1$ the least uncountable ordinal $\aleph_1$. We recall that a set $X$ is $\omega_1$-measurable if and only if there exists a measurable cardinal smaller than or equal to $\# X$. Since the existence of a measurable cardinal is unprovable under $\textsf{ZFC}$ as long as $\textsf{ZFC}$ is consistent, so is the existence of an $\omega_1$-measurable set. As a consequence of Theorem \ref{p-adic Eda's theorem}, we obtain a non-Archimedean analogue of Theorem \ref{Dugas--Zimmermann-Huisgen's theorem}:

\begin{crl}[non-Archimedean Dugas--Zimmermann-Huisgen's extension of Chase's lemma]
\label{p-adic Dugas--Zimmermann-Huisgen's theorem}
If $I$ is not $\omega_1$-measurable, then for any $r \in \R_{> 0}$, there exists an $(r',I_0,J_0) \in \R_{> \n{f}^{-1} r} \times \cP_{< \omega}(I) \times \cP_{< \omega}(J)$ such that the inclusion
\be
f \left[ \prod^{I_0,r'} V \right] \subset \bigoplus^{J_0,r} W
\ee
holds.
\end{crl}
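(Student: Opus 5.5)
We derive the corollary directly from Theorem \ref{p-adic Eda's theorem}. Fix $r \in \R_{> 0}$. Theorem \ref{p-adic Eda's theorem} gives an $(r',U_0,J_0) \in \R_{> \n{f}^{-1} r} \times \cP_{< \omega}(\beta_{\omega} I) \times \cP_{< \omega}(J)$ with $f[\prod^{U_0,r'} V] \subset \bigoplus^{J_0,r} W$. The plan is to show that when $I$ is not $\omega_1$-measurable, every element of $\beta_{\omega} I$ is principal. Then $U_0 = \iota_I[I_0]$ for a finite $I_0 \in \cP_{< \omega}(I)$, and $\prod^{U_0,r'} V = \prod^{I_0,r'} V$. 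The triple $(r',I_0,J_0)$ is then the desired one.

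The identification $\prod^{\iota_I[I_0],r'} V = \prod^{I_0,r'} V$ is immediate from the definitions. For $i \in I$, the condition $\set{i' \in I}{x(i') = 0} \in \iota_I(i)$ means exactly $x(i) = 0$. The norm condition is the same in both sets. Concretely, for each $\cF \in U_0$ we choose $i_{\cF} \in I$ with $\cF = \iota_I(i_{\cF})$ and put $I_0 \coloneqq \set{i_{\cF}}{\cF \in U_0}$, which is finite because $U_0$ is.

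The remaining step, and the only one needing care, is that every $\cF \in \beta_{\omega} I$ is principal when $I$ is not $\omega_1$-measurable. We split into cases on the cardinality of $I$.

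\begin{itemize}
\item If $I$ is countable, suppose $\cF$ is non-principal. Then $I \setminus \ens{i} \in \cF$ for every $i \in I$, since $\ens{i} \notin \cF$ and $\cF$ is an ultrafilter. Closure under countable intersection gives $\emptyset = \bigcap_{i \in I} (I \setminus \ens{i}) \in \cF$, a contradiction.
\item If $I$ is uncountable, non-$\omega_1$-measurability means by definition that $I$ admits no $\omega_1$-complete non-principal ultrafilter.
\end{itemize}

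It remains to check that closure under countable intersection, as used to define $\beta_{\omega} I$, agrees with $\omega_1$-completeness in the sense of the definition given just before the corollary. That definition requires closure under intersections of nonempty families of fewer than $\omega_1$ members, i.e.\ nonempty countable (possibly finite) families. Closure under finite intersections is automatic for a filter, so the two notions coincide. Hence $\cF$ is principal in both cases, which completes the argument.
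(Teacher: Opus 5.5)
Your proposal is correct and follows the paper's own proof: apply Theorem \ref{p-adic Eda's theorem}, use $\beta_{\omega} I = \iota_I[I]$ to write $U_0 = \iota_I[I_0]$ with $I_0$ finite, and identify $\prod^{U_0,r'} V$ with $\prod^{I_0,r'} V$. The paper only asserts $\beta_{\omega} I = \iota_I[I]$, and you fill this in. You treat the countable case separately, which matters because the paper's definition of $\omega_1$-measurability requires uncountability, and you check that closure under countable intersection coincides with $\omega_1$-completeness.
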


We note that Corollary \ref{p-adic Dugas--Zimmermann-Huisgen's theorem} immediately follows from Lemma \ref{countably complete measure} and Lemma \ref{cI is cD}, because they imply $I \in \cI_{f,r}$. We give an alternative proof in order to make it clear that Corollary \ref{p-adic Dugas--Zimmermann-Huisgen's theorem} is a special case of Theorem \ref{p-adic Eda's theorem}.

\begin{proof}
By Theorem \ref{p-adic Eda's theorem}, there exists an $(r',U_0,J_0) \in \R_{> \n{f}^{-1} r} \times \cP_{< \omega}(\beta_{\omega} I) \times \cP_{< \omega}(J)$ such that the inclusion
\be
f \left[ \prod^{U_0,r'} V \right] \subset \bigoplus^{J_0,r} W
\ee
holds. Since $I$ is not $\omega_1$-measurable, we have $\beta_{\omega} I = \iota_I[I]$. Therefore, there exists an $I_0 \in \cP_{< \omega}(I)$ such that $U_0 = \iota_I[I_0]$. We have
\be
\prod^{U_0,r'} V & = & \set{x \in V}{\forall \cF \in U_0[\set{i \in I}{x(i) = 0} \in \cF]} = \set{x \in V}{\forall i_0 \in I_0[\set{i \in I}{x(i) = 0} \in \iota_I(i_0)]} \\
& = & \set{x \in V}{\forall i_0 \in I_0[x(i_0) = 0]} = \prod^{I_0,r} V,
\ee
and hence $(r',I_0,J_0)$ satisfies the desired condition.
\end{proof}

As an extension of Corollary \ref{operator norm reduction}, we obtain the following operator norm reduction property:

\begin{crl}
If $I$ is not $\omega_1$-measurable and $f$ is $k$-linearisable for a valuation field $k$ such that $\v{k}$ is dense in $\R_{\geq 0}$, then there exists an $(I_0,J_0) \in \cP_{< \omega}(I) \times \cP_{< \omega}(J)$ such that $\n{f \upharpoonright^{I_0,J_0}} < \n{f}$.
\end{crl}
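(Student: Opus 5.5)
We plan to argue exactly as in the proof of Corollary \ref{operator norm reduction}, replacing Theorem \ref{p-adic Chase's lemma} by Corollary \ref{p-adic Dugas--Zimmermann-Huisgen's theorem}. Since $I$ is not $\omega_1$-measurable, we apply Corollary \ref{p-adic Dugas--Zimmermann-Huisgen's theorem} with $r = 1$. This gives an $(r',I_0,J_0) \in \R_{> \n{f}^{-1}} \times \cP_{< \omega}(I) \times \cP_{< \omega}(J)$ such that
\be
f \left[ \prod^{I_0,r'} V \right] \subset \bigoplus^{J_0,1} W.
\ee
The goal is then to show $\n{f \upharpoonright^{I_0,J_0}} \leq (r')^{-1}$. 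Since $r' > \n{f}^{-1}$, this gives $(r')^{-1} < \n{f}$, which is the desired strict inequality.

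To bound the operator norm, we fix Banach $k$-vector space structures on the $V(i)$ and $W(j)$ for which $f$ is $k$-linear. Then $f \upharpoonright^{I_0,J_0}$ is also $k$-linear, because it is a composite of the $k$-linear zero extension, $f$, and the $k$-linear projection.

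Take a non-zero $x \in \prod (V \upharpoonright (I \setminus I_0))$ and an $\epsilon \in (0,r')$. Since $\v{k}$ is dense in $\R_{\geq 0}$, we choose $c \in k$ with $(r'-\epsilon)\n{x}^{-1} < \v{c} \leq r' \n{x}^{-1}$. Then $c \neq 0$, and the zero extension of $cx$ lies in $\prod^{I_0,r'} V$, since it vanishes on $I_0$ and has norm at most $r'$. Its image under $f$ therefore lies in $\bigoplus^{J_0,1} W$. So every coordinate of $f(cx)$ outside $J_0$ has norm at most $1$, which means $\n{f \upharpoonright^{I_0,J_0}(cx)} \leq 1$. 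By $k$-linearity and multiplicativity of the norm,
\be
\n{f \upharpoonright^{I_0,J_0}(x)} = \v{c}^{-1} \n{f \upharpoonright^{I_0,J_0}(cx)} \leq (r'-\epsilon)^{-1} \n{x}.
\ee
Letting $\epsilon \to +0$ gives $\n{f \upharpoonright^{I_0,J_0}(x)} \leq (r')^{-1} \n{x}$. The case $x = 0$ is trivial. Hence $\n{f \upharpoonright^{I_0,J_0}} \leq (r')^{-1} < \n{f}$.

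There is no real obstacle here. The only point needing care is the application of Corollary \ref{p-adic Dugas--Zimmermann-Huisgen's theorem}, which requires $f$ to be non-zero, as fixed in the standing hypotheses. The rest is the same scaling argument as in Corollary \ref{operator norm reduction}, with its density and linearity step carried over verbatim. Equivalently, the statement follows from Theorem \ref{p-adic Eda's theorem} together with $\beta_{\omega} I = \iota_I[I]$, which is how the proof of Corollary \ref{p-adic Dugas--Zimmermann-Huisgen's theorem} proceeds.
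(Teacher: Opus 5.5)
Your proposal is correct and is essentially the paper's own proof. The paper likewise applies Corollary \ref{p-adic Dugas--Zimmermann-Huisgen's theorem} (with $r = 1$) and then repeats the scaling argument from the proof of Corollary \ref{operator norm reduction} verbatim, obtaining $\n{f \upharpoonright^{I_0,J_0}} \leq (r')^{-1} < \n{f}$.
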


\begin{proof}
The assertion immediately follows from Corollary \ref{p-adic Dugas--Zimmermann-Huisgen's theorem} by an argument completely parallel to the proof of Corollary \ref{operator norm reduction}.
\end{proof}

%% file: References.tex
\vspace{0.3in}
\addcontentsline{toc}{section}{Acknowledgements}
\noindent {\Large \bf Acknowledgements}
\vspace{0.2in}

\noindent
I thank K.\ Eda for introducing to me almost everything in this paper: classical results on Specker phenomenon, {\L}o\'s--Eda theorem for $\Z^I$ and $\ell^{\infty}(I,k)$, Chase's lemma, Eda's extension of Dugas--Zimmermann-Huisgen's extension of Chase's lemma, and expectation that a non-Archimedean counterpart of Chase's lemma should exist. I thank K.\ Ishizuka for informing me of several preceding studies and references on reflexivity in the non-Archimedean setting. I thank Y.\ Isono for recalling me of basic facts on reflexivity in the Archimedean setting. I thank all people who helped me to learn mathematics and programming. I also thank my family.

%